\newcommand{\R}{\mathbb{R}}
\DeclareMathOperator*{\argmax}{arg\,max}
\newcommand{\perm}{\mathbb{P}}
\newcommand{\onevec}{\mathbf{1}}
\newcommand{\zerovec}{\mathbf{0}}
\newcommand{\im}{\operatorname{im}}
\newcommand{\tr}{\operatorname{tr}}
\newcommand{\diag}{\operatorname{diag}}
\newcommand{\proj}{\operatorname{proj}}
\newcommand{\St}{\operatorname{St}}
\newcommand{\OO}{\mathbb{O}}
\newtheorem{lemma}{Lemma}
\newtheorem{definition}{Definition}
\newtheorem{assumption}{Assumption}
  \def\command@factory#1{%
    \expandafter\def\csname vec#1\endcsname{\mathbf{#1}}
  }
  \def\command@factory#1{%
    \expandafter\def\csname mat#1\endcsname{\mathbf{#1}}
  }
  \def\command@factory#1{%
    \expandafter\def\csname set#1\endcsname{\mathcal{#1}}
  }
\def\greekvectors#1{%
 \@for\next:=#1\do{%
    \def\X##1;{%
     \expandafter\def\csname mat##1\endcsname{\boldsymbol{\csname##1\endcsname}}
     }
  \expandafter\X\next;
  }
}
\newcommand*{\addFileDependency}[1]{%
  \typeout{(#1)}
  \@addtofilelist{#1}
  \IfFileExists{#1}{}{\typeout{No file #1.}}
}
\title{
Sparse Quadratic Optimisation over the\\ Stiefel Manifold with Application to\\ Permutation Synchronisation
}
\author{%
  Florian Bernard\\
  TU Munich, University of Bonn
  \And
  Daniel Cremers\\TU Munich
  \And 
  Johan Thunberg\\
  Halmstad University
}
\begin{document}

\maketitle

\begin{abstract}
We address the non-convex optimisation problem of finding a sparse matrix on the Stiefel manifold (matrices with mutually orthogonal columns of unit length) that maximises (or minimises) a quadratic objective function. Optimisation problems on the Stiefel manifold occur for example in spectral relaxations of various combinatorial problems, such as graph matching, clustering, or permutation synchronisation. Although sparsity is a desirable property in such settings, it is mostly neglected in spectral formulations since existing solvers,~e.g.~based on eigenvalue decomposition, are unable to account for sparsity while at the same time maintaining global optimality guarantees. We fill this gap and propose a simple yet effective sparsity-promoting modification of the Orthogonal Iteration algorithm for finding the dominant eigenspace of a matrix. By doing so, we can guarantee that our method finds a Stiefel matrix that is globally optimal with respect to the quadratic objective function, while in addition being sparse. As a motivating application we consider the task of permutation synchronisation, which can be understood as a constrained clustering problem that has particular relevance for matching multiple images or 3D shapes in computer vision, computer graphics, and beyond. We demonstrate that the proposed approach outperforms previous methods in this domain.
\end{abstract}

\section{Introduction}
We are interested in optimisation problems of the form
\begin{align}\label{eq:stiefel}
    \argmax_{U \in \St(m,d)} ~f(U) ~~\text{with}~~ f(U) =\tr(U^T W U) ~~\text{and}~~ \St(m,d) := \{X \in \R^{m \times d}: X^T X = \matI_d\}\,,
\end{align}
where $\matI_d$ is the identity matrix of dimension $d$, $W \in \R^{m \times m}$ and the set $\St(m,d)$ denotes the Stiefel manifold ($m \geq d$). Throughout the paper, w.l.o.g.~we consider a maximisation formulation and we assume that $W$ is a symmetric and positive semidefinite matrix (see Lemma~\ref{ass:sympsd} in Sec.~\ref{sec:bg}). Despite the non-convexity of Problem~\eqref{eq:stiefel}, problems of this form can be solved to global optimality based on the eigenvalue decomposition,~i.e.~by setting $U^* = V_d$, where $V_d \in \R^{m \times d}$ denotes an orthogonal basis of eigenvectors corresponding to the $d$ largest eigenvalues of $W$, see Lemma~\ref{lem:solprob}. The fact that we can efficiently find global optima of Problem~\eqref{eq:stiefel} makes it a popular relaxation formulation of various difficult combinatorial problems. This includes spectral relaxations~\cite{umeyama1988eigendecomposition,Leordeanu:2005ur,Cour:2006un} of the NP-hard quadratic assignment problem (QAP)~\cite{Pardalos:1993uo}, spectral clustering~\cite{ng2001spectral,von2007tutorial}, or spectral permutation synchronisation~\cite{Pachauri:2013wx,shen2016normalized,Maset:YO8y6VRb}. Yet, a major difficulty of such approaches is to discretise the (relaxed) continuous solution in order to obtain a feasible solution of the original (discrete) problem. For example, in the QAP one optimises over the set of permutation matrices, in  clustering one optimises over the set of matrices with rows comprising canonical basis vectors, and in permutation synchronisation one optimises over multiple permutation matrices that are stacked into a large block matrix. Often such combinatorial sets can be characterised by Stiefel matrices that are \emph{sparse} -- for example, the set of (signed) $d {\times} d$ permutation matrices can be characterised by matrices in $\St(d,d)$ that have exactly $d$ non-zero elements, whereas any other element in $\St(d,d)$  that has more than $d$ non-zero elements is not a signed permutation matrix.

The most common approach to obtain sparse solutions in an optimisation problem is to integrate an explicit sparsity-promoting regulariser.
However, a major hurdle when considering quadratic optimisation problems over the Stiefel manifold is that the incorporation of sparsity-promoting terms is not compatible with solutions based on eigenvalue decomposition, so that desirable global optimality guarantees are generally no longer maintained.

Instead, we depart from the common path of integrating explicit regularisers and instead exploit the orthogonal-invariance present in Problem~\eqref{eq:stiefel} to assure sparsity. To be more specific, for any orthogonal matrix $Q \in \OO(d) := \St(d,d)$ it holds that $f(U) = \tr(U^T W U) = \tr(U^T W UQQ^T) = \tr((UQ)^T W (UQ)) = f(UQ)$. Hence, if $\bar{U} \in \St(m,d)$ is a solution to Problem~\eqref{eq:stiefel}, so is $\bar{U}Q \in \St(m,d)$ for any $Q \in \OO(d)$. The subspace $\im(\bar{U}Q)$ that is spanned by $\bar{U}Q$ for a given $\bar{U} \in \St(m,d)$ (and arbitrary $Q \in \OO(d)$) is equal to the subspace $\im(\bar{U})$.
Motivated by this observation  we  utilise orthogonal-invariance in order to find a solution  
$U^* \in \{ U \in \St(m,d) : \im(U) = \im(\bar{U})\}$ such that $U^*$ is sparse. To this end, we build upon the additional degrees of freedom due to $Q \in \OO(d)$, which allows to rotate a given solution $\bar{U}\in \St(m,d)$ to a sparser representation $U^* = \bar{U} Q \in \St(m,d)$ -- most notably, while remaining a globally optimal solution to Problem~\eqref{eq:stiefel}.

\textbf{Main contributions.} We summarise our main contributions as follows: (i) For the first time we propose an algorithm that exploits the orthogonal-invariance in quadratic optimisation problems over the Stiefel manifold while simultaneously accounting for sparsity in the solution. (ii) Despite its simplicity, our algorithm is effective as it builds on a modification of the well-established Orthogonal Iteration algorithm for finding the most dominant eigenspace of a given matrix. (iii) Our algorithm is guaranteed to converge to the dominant subspace with the same convergence rate as the original Orthogonal Iteration algorithm, and our solution constitutes a global optimiser of Problem~\eqref{eq:stiefel}. (iv) We experimentally confirm the efficacy of our approach in the context of the permutation synchronisation problem.

\section{Preliminaries \& Related Work}\label{sec:bg}
In this section we clarify our assumptions, introduce additional preliminaries, and provide reference to related work.
Let $\lambda_1, \lambda_2, \ldots, \lambda_m$ be the eigenvalues of $W \in \R^{m \times m}$ ordered decreasingly. We impose the following assumption on $W$:
\begin{assumption}[Separated eigenspace]
We assume that $\lambda_d > \lambda_{d+1}$.
\end{assumption}
Throughout the paper we also assume that $W$ is symmetric and positive semidefinite (p.s.d.), which, however, is not a restriction as the following straightforward result indicates:
\begin{lemma}[Symmetry and positive semidefiniteness]\label{ass:sympsd}
In Problem~\eqref{eq:stiefel}, if  $W$ is not symmetric and not p.s.d.~there is an equivalent optimisation problem (i.e.~with the same optimisers) where $W$ has been replaced by a symmetric p.s.d.~matrix $\tilde W$.
\end{lemma}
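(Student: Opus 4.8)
The plan is to transform $W$ in two successive stages, each of which leaves the $\argmax$ of Problem~\eqref{eq:stiefel} unchanged, and then to verify that the matrix $\tilde W$ produced at the end is symmetric and positive semidefinite. Since each stage preserves the optimisers, the two problems are \emph{equivalent} in the sense demanded by the statement.

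First I would symmetrise $W$. Because $\tr(A)=\tr(A^T)$ for any square matrix $A$, applying this to $A=U^T W U$ gives $\tr(U^T W U)=\tr(U^T W^T U)$; averaging the two expressions yields $\tr(U^T W U)=\tr(U^T W_{\mathrm{s}} U)$ with $W_{\mathrm{s}}:=\tfrac12(W+W^T)$ symmetric. This identity holds pointwise for every $U$, so replacing $W$ by $W_{\mathrm{s}}$ alters neither the objective values nor the set of optimisers.

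Second I would shift $W_{\mathrm{s}}$ by a multiple of the identity to enforce positive semidefiniteness, exploiting the fact that feasibility forces $U^T U=\matI_d$. For any scalar $c$ we have
$$\tr\!\big(U^T (W_{\mathrm{s}}+c\,\matI_m)\,U\big)=\tr(U^T W_{\mathrm{s}} U)+c\,\tr(U^T U)=\tr(U^T W_{\mathrm{s}} U)+c\,d,$$
so adding $c\,\matI_m$ merely offsets the objective by the constant $c\,d$ uniformly across the whole feasible set, hence preserves the $\argmax$. Since $W_{\mathrm{s}}$ is symmetric its eigenvalues are real, and choosing any $c\ge -\lambda_{\min}(W_{\mathrm{s}})$ shifts every eigenvalue to be nonnegative, so $\tilde W:=W_{\mathrm{s}}+c\,\matI_m$ is symmetric and p.s.d.

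There is no genuine obstacle in this argument; the only point requiring care is to note that the additive offset $c\,d$ is independent of $U$ \emph{precisely because} $U$ ranges over the Stiefel manifold (on an unconstrained domain the shift would change the objective non-uniformly and could move the optimisers). Recording that both transformations act on the objective identically at every feasible point is exactly what guarantees that the maximiser set is unchanged.
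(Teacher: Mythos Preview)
Your proof is correct and follows essentially the same approach as the paper: symmetrise $W$ (the paper phrases this as showing the skew-symmetric part contributes zero to the trace, which is equivalent to your averaging argument), then shift by a multiple of the identity, using $U^T U = \matI_d$ to see that this adds only a constant $c\,d$ to the objective. The paper fixes the shift to be exactly $-\lambda_m$, whereas you allow any $c\ge -\lambda_{\min}(W_{\mathrm{s}})$, but this is an inessential difference.
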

\begin{proof}
See Appendix.
\end{proof}

Next, we define the notion of a dominant subspace and convergence of matrix sequences to such a subspace.
\begin{definition}[Dominant invariant subspace]\label{def:domsub}
 The $d$-dimensional \emph{dominant invariant subspace} (or \emph{dominant subspace} in short) of the matrix $W \in \R^{m \times m}$ is defined as the subspace $\im(V_d) \subseteq \R^{m \times m}$, where $V_d \in \St(m,d)$ is the matrix whose columns are formed by the $d$ eigenvectors corresponding to the $d$ largest eigenvalues of $W$. 
\end{definition}

\begin{definition}[Convergence]
We say that a sequence of matrices $\{U_t\}$ converges to the dominant subspace of $W$ if  %
$\lim_{t \rightarrow \infty} \| V_dV_d^TU_t - U_t \| = 0$.
\end{definition}

There exists a close relation between the dominant subspace of $W$ and solutions of Problem~\eqref{eq:stiefel}:
\begin{lemma}[Solution to Problem~\eqref{eq:stiefel}]\label{lem:solprob}
Problem~\eqref{eq:stiefel} is solved for any matrix $U^*$ that forms an orthogonal basis for the $d$-dimensional dominant subspace of $W$,~i.e.~$U^* \in \{ U \in \St(m,d) : \im(U) = \im(V_d)\}$.
\end{lemma}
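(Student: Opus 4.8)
The plan is to reduce the trace maximisation to a linear program over the diagonal of an orthogonal projector, thereby establishing the classical Ky~Fan bound, and then to invoke the orthogonal-invariance already noted in the introduction to transfer optimality from $V_d$ to an arbitrary orthonormal basis of the dominant subspace.

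First I would diagonalise $W = V \Lambda V^T$ with $V \in \OO(m)$ and $\Lambda = \diag(\lambda_1,\dots,\lambda_m)$, $\lambda_1 \ge \cdots \ge \lambda_m \ge 0$ (the p.s.d.~assumption giving nonnegativity). For an arbitrary $U \in \St(m,d)$ I set $Y = V^T U$ and observe $Y^T Y = \matI_d$, so $Y \in \St(m,d)$ as well, whence $f(U) = \tr(U^T W U) = \tr(Y^T \Lambda Y) = \sum_{i=1}^m \lambda_i p_i$, where $p_i := (YY^T)_{ii} = \sum_{j=1}^d Y_{ij}^2$ is the $i$-th diagonal entry of the orthogonal projector $YY^T$.

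The key step is to pin down the feasible region for the weights $p_i$. Since $YY^T$ is a symmetric idempotent of rank $d$, its diagonal entries satisfy $0 \le p_i \le 1$, and $\sum_i p_i = \tr(YY^T) = \tr(Y^TY) = d$. Maximising the linear functional $\sum_i \lambda_i p_i$ over this polytope, with the $\lambda_i$ sorted decreasingly, is a greedy linear program whose optimum $p_1 = \cdots = p_d = 1$, $p_{d+1} = \cdots = p_m = 0$ yields the upper bound $f(U) \le \sum_{i=1}^d \lambda_i$. I expect establishing $p_i \le 1$ (equivalently, that the diagonal of a rank-$d$ projector lies in $[0,1]$) to be the only non-routine point; everything else is bookkeeping.

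Finally I would show the bound is attained exactly on the dominant subspace. A direct computation gives $f(V_d) = \tr(V_d^T W V_d) = \sum_{i=1}^d \lambda_i$, so $V_d$ meets the upper bound and is a global maximiser. For an arbitrary $U^*$ with $\im(U^*) = \im(V_d)$, both $U^*$ and $V_d$ are orthonormal bases of the same $d$-dimensional subspace, hence $U^* = V_d Q$ for some $Q \in \OO(d)$; the orthogonal-invariance $f(V_dQ) = f(V_d)$ recalled in the introduction then gives $f(U^*) = \sum_{i=1}^d \lambda_i$, so $U^*$ is optimal. (If desired, the strict gap $\lambda_d > \lambda_{d+1}$ of Assumption~1 forces equality in the linear program only when all mass concentrates on the top $d$ coordinates, which yields the converse that every maximiser has $\im(U) = \im(V_d)$.)
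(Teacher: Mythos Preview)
Your proposal is correct and essentially identical to the paper's own proof: both diagonalise $W$, substitute $Y = V^T U \in \St(m,d)$ (the paper calls it $R$), rewrite the objective as $\sum_i \lambda_i \|r_i\|_2^2$, relax to the linear program over $p_i \in [0,1]$ with $\sum_i p_i = d$, and then check that $V_d$ attains the bound and invoke orthogonal-invariance for any $U^* = V_d Q$. Your phrasing via the diagonal of the projector $YY^T$ is just a cosmetic variant of the paper's row-norm phrasing, since $(YY^T)_{ii} = \|r_i\|_2^2$.
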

\begin{proof}
See Appendix.
\end{proof}

\textbf{Basic algorithms for computing eigenvectors.} In order to find the dominant subspace of $W$
we can use algorithms for finding eigenvectors. The Power method~\cite{golub2013matrix} is an efficient way for finding the single most dominant eigenvector of a given matrix,~i.e.~it considers the case $d=1$. It proceeds by iteratively updating a given initial $v_0 \in \St(m,1)$ based on the update $v_{t+1} \gets Wv_t/\|W v_t\|$. In order to find the $d$ most dominant eigenvectors, one can consider the Orthogonal Iteration algorithm~\cite{golub2013matrix}, which generalises the Power method to the case $d>1$.
The algorithm proceeds by repeatedly computing $V_{t+1}R_{t+1} \gets W V_{t}$ based on the (thin) QR-decomposition of $W V_{t}$, where $V_t \in \St(m,d)$ and $R_t \in \R^{d \times d}$ is upper triangular. For $t \rightarrow \infty$
the sequence $\{V_{t}\}$ converges (under mild conditions on the intial $V_0$) to the dominant invariant subspace as long as $|\lambda_d| > |\lambda_{d+1}|$, see Thm.~8.2.2 in~\cite{golub2013matrix}.

\textbf{Sparse Stiefel optimisation.}
There are numerous approaches for addressing general optimisation problems over the Stiefel manifold, including generic manifold optimisation techniques (e.g.~\cite{absil2009optimization,manopt}), or Stiefel-specific approaches (e.g.~\cite{manton2002optimization,wen2013feasible}). In the following we will focus on works that consider \emph{sparse} optimisation over the Stiefel manifold that are most relevant to our approach.
In order to promote sparse solutions, sparsity-inducing regularisers can be utilised, for example via the minimisation of the (non-convex) $\ell_p$-`norm' for $0 \leq p < 1$, or the (convex) $\ell_1$-norm~\cite{qu2016finding}. However, the non-smoothness of such regularisers often constitutes a computational obstacle. 

The optimisation of non-smooth functions over the Stiefel manifold has been considered in~\cite{chen2020proximal}, where the sum of a non-convex smooth and a convex non-smooth function is optimised via a proximal gradient method. In~\cite{li2021weakly}, the authors consider the optimisation of a sum of non-smooth weakly convex functions over the Stiefel manifold using Riemannian subgradient-type methods. Yet, in practice often differentiable surrogates of non-smooth sparsity-promoting terms are considered~\cite{song2015sparse,lu2016convex,qu2020finding,breloy2021majorization}.
Instead of minimising $\ell_p$-`norms' with $0 \leq p \leq 1$, on the Stiefel manifold one may instead choose the \emph{maximisation} of $\ell_p$-norms with $p>2$, such as the %
$\ell_3$-norm~\cite{xue2020blind}, or the $\ell_4$-norm~\cite{zhai2020complete}. Further motivation for sparsity promoting higher-order norms in this context can be found in~\cite{qu2019geometric,zhang2019structured,li2018global}.

Optimisation over the Stiefel manifold has a close connection to optimisation over the Grassmannian manifold~\cite{absil2009optimization}. There are numerous approaches for Grassmannian manifold optimisation  (e.g.~\cite{edelman1998geometry,de2004grassmann}), including sparse optimisation via an $\ell_1$-norm regulariser~\cite{wang2017grassmannian} and the optimisation of non-convex and non-smooth objective functions via a projected Riemannian subgradient method~\cite{zhu2019linearly}. In our case, due to the rotation-invariance of the objective $f(U) = f(UQ)$ for any $Q \in \OO(D)$ in Problem~\eqref{eq:stiefel}, finding the dominant subspace of $W$ could also be posed as an optimisation problem over the Grassmannian manifold. However, we are not only interested in identifying this subspace (which can for example be done via the Orthogonal Iterations algorithm~\cite{golub2013matrix}, or via Grassmann-Rayleigh Quotient Iterations~\cite{absil2002grassmann}), but we want to find a specific choice of coordinates for which the representation of the subspace is sparse.

\section{Proposed Method for Sparse Quadratic Optimisation over the Stiefel}
\begin{algorithm}\label{alg:qrsync}
\SetKwInput{Input}{Input}
\SetKwInput{Output}{Output}
\SetKwInput{Initialise}{Initialise}
\SetKwRepeat{Do}{do}{while}
\DontPrintSemicolon

 \Input{$W \in \R^{m \times m}$, $U_0 \in \R^{m \times d}$, $\epsilon > 0$}
 \Output{$U^*$}
\Initialise{$t \gets 0$}%

  \Repeat{convergence}{ \label{alg:line1} 
    $U_{t+1}R_{t+1} \gets WU_t Z(U_t)$ \tcp{unique QR-decomposition}
   }
    $U^* \gets U_{t+1}$
 \caption{Overview of our proposed algorithm. In terms of convergence properties our algorithm is equivalent to the Orthogonal Iteration algorithm and thus produces a $U^*$ in the dominant subspace of $W$. However, our modification introduces the matrix $Z(U_t)$ with the purpose of promoting sparsity of the solution (see Sec.~\ref{sec:choosingZ} how we choose $Z(U_t)$). }
\end{algorithm}
In this section we introduce our Algorithm~\ref{alg:qrsync} that can be seen as a modification of the popular Orthogonal Iteration algorithm. The main difference is that we iteratively weigh the matrix $U_t \in \St(m,d)$ by a matrix $Z(U_t) \in \R^{d \times d}$, where the matrix function $Z$ maps onto the set of full rank matrices.
The purpose of the matrix $Z(U_t)$ (see Sec.~\ref{sec:choosingZ} for our specific choice) is to promote sparsity.
Intuitively, we characterise sparsity as having few elements that are large, whereas most elements are close to zero, which is formally expressed in \eqref{eq:sec}.

First, we focus on the overall interpretation of our approach:
We want to ensure that we retrieve a $U^*$ that is a global maximiser of Problem~\eqref{eq:stiefel}, which, in some relaxed sense, is also sparse. However, we cannot in general augment the objective function in Problem~\eqref{eq:stiefel} with an explicit sparsity regulariser and expect that the respective solution is still a maximiser of the original objective. Instead we steer the solution towards being more sparse by weighing our  matrix of interest $U_t$ with the matrix $Z(U_t)$. 

\subsection{Convergence}
We start by ensuring that, under the assumption that $Z_t(U_t)$ is full rank, the sequence $\{U_t\}$ generated by Algorithm~\ref{alg:qrsync} converges to the dominant subspace of $W$. This can straightforwardly be shown by using the following result. 

\begin{lemma}\label{lem:101}
Consider the two algorithms:\par
$\qquad$\textbf{Algorithm A: } $V_{t+1}R_{t+1} \gets WV_tX_t$, \par
$\qquad$\textbf{Algorithm B: } $\bar V_{t+1}\bar R_{t+1} \gets  W\bar V_t$, \par
where $X_t \in \mathbb{R}^{d \times d}$ is full rank, and for each of the two left-hand sides above the two matrices in the product are obtained by the unique (thin) QR-factorisation of the corresponding right-hand side, where $R_{t+1}$ and $\bar{R}_{t+1}$  are upper triangular with positive diagonal. Now, if $V_t = \bar V_t \in \St(m,d)$ up to a rotation from the right, then $V_{t'}$ is equal to $\bar V_{t'}$ up to rotation from the right for all $t' > t$.
\end{lemma}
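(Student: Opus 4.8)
The plan is to prove the statement by induction on $t'$, for which it suffices to establish a single step: assuming $V_t = \bar V_t Q_t$ for some orthogonal $Q_t \in \OO(d)$, I would show that $V_{t+1} = \bar V_{t+1} Q_{t+1}$ for some $Q_{t+1} \in \OO(d)$. The base case $t' = t$ is exactly the hypothesis (``$V_t = \bar V_t$ up to a rotation from the right''), and iterating the single step then yields the claim for all $t' > t$, with a rotation that is allowed to depend on $t'$.

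For the single step, I would substitute $V_t = \bar V_t Q_t$ into the right-hand side of Algorithm~A and use Algorithm~B to peel off the factor $\bar V_{t+1}$:
\[
  W V_t X_t = W \bar V_t Q_t X_t = \bar V_{t+1} \bar R_{t+1} Q_t X_t = \bar V_{t+1} M, \qquad M := \bar R_{t+1} Q_t X_t \in \R^{d \times d}.
\]
Since $\bar R_{t+1}$ is upper triangular with positive diagonal (hence invertible), $Q_t$ is orthogonal, and $X_t$ is full rank by assumption, the product $M$ is invertible.

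Next I would take the unique QR-factorisation $M = \hat Q \hat R$ with $\hat Q \in \OO(d)$ and $\hat R$ upper triangular with strictly positive diagonal, which exists and is unique precisely because $M$ is invertible. Substituting gives $W V_t X_t = (\bar V_{t+1}\hat Q)\hat R$. Here $\bar V_{t+1}\hat Q \in \St(m,d)$, because $\bar V_{t+1}$ has orthonormal columns and $\hat Q$ is orthogonal, and $\hat R$ is upper triangular with positive diagonal; hence $(\bar V_{t+1}\hat Q)\hat R$ is a valid thin QR-factorisation of $W V_t X_t$. The matrix $W V_t X_t$ has the same rank as $W \bar V_t$, since the two differ only by the invertible right factor $Q_t X_t$, so it has full column rank and its thin QR-factorisation under these normalisation conventions is unique. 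Comparing with Algorithm~A's output $W V_t X_t = V_{t+1} R_{t+1}$ then forces $V_{t+1} = \bar V_{t+1}\hat Q$ and $R_{t+1} = \hat R$; setting $Q_{t+1} := \hat Q$ completes the induction step.

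The substitution and the rank bookkeeping are routine, so the one place that genuinely requires care, and the main obstacle such as it is, is the normalisation argument underlying uniqueness. This is why I would insist on extracting the positive-diagonal QR-factorisation of $M$ rather than any arbitrary factorisation $\bar V_{t+1} M$: only when the composed factors satisfy the exact conventions (orthonormal first factor, upper-triangular second factor with positive diagonal) does the uniqueness of the thin QR-factorisation apply and allow me to identify $V_{t+1}$ with $\bar V_{t+1}$ up to the orthogonal factor $\hat Q$.
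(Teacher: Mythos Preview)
Your proposal is correct and follows essentially the same approach as the paper: substitute $V_t = \bar V_t Q_t$ into Algorithm~A, use Algorithm~B to write $W V_t X_t = \bar V_{t+1}(\bar R_{t+1} Q_t X_t)$, take the QR-factorisation of the $d\times d$ matrix in parentheses, and invoke uniqueness of the thin QR-factorisation to identify $V_{t+1}$ with $\bar V_{t+1}$ times an orthogonal factor. Your write-up is in fact more careful than the paper's about why $M$ is invertible and why uniqueness applies, but the argument is the same.
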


\begin{proof}
Suppose $V_t = \bar V_t \bar Q_t$, where $\bar Q_t \in \OO(d)$. It holds that $V_{t+1}R_{t+1}  = WV_t X_t  = W \bar V_t \bar Q_t X_t = \bar V_{t+1} (\bar R_{t+1} \bar Q_t X_t) = \bar V_{t+1} \tilde{Q}_{t+1}\tilde{R}_{t+1}$,
where $\tilde{Q}_{t+1}$ and $\tilde{R}_{t+1}$ are the two matrices in the QR-factorisation of the matrix $(\bar R_{t+1} \bar Q_t X_t)$. This means that $V_{t+1} = \bar V_{t+1} \tilde{Q}_{t+1}$. This is due to the uniqueness of the QR-factorisation, so that also $R_{t+1} = \tilde{R}_{t+1}$. Now the results follows readily by using induction. 
\end{proof}

Thus, if $Z_t(U_t)$ is full rank for each $t$, we can identify our Algorithm~\ref{alg:qrsync} with Algorithm A in Lemma~\ref{lem:101}, and conclude that the sequence $\{U_t\}$ converges to the dominant subspace of $W$ as $t \rightarrow \infty$. This is because Algorithm B in Lemma~\ref{lem:101} corresponds to the Orthogonal Iteration algorithm, which is known to converge to the dominant subspace of $W$~\cite{golub2013matrix}. 

We continue by investigating the behaviour at the limit where the columns of $U_t$ are in the dominant subspace of $W$. Let us consider the update $U_{t+1}R_{t+1} \gets WU_t Z(U_t)$ of Algorithm~\ref{alg:qrsync},
and further assume that the $d$ largest eigenvalues of $W$ are equal, which is for example the case for synchronisation problems assuming cycle consistency (cf.~Sec.~\ref{sec:permsync}). 
\begin{lemma}\label{lem:xxx1}
Assume that the $d$ largest eigenvalues of the p.s.d. matrix $W$  are all equal to one\footnote{If they are all equal to a value $\lambda_1 \neq 1$, we can w.l.o.g.~consider $\lambda_1^{-1}W$ in place of $W$, since the optimiser of Problem~\eqref{eq:stiefel} is invariant to scaling $W$.} and strictly larger than the other eigenvalues, and assume that the columns of $U_t \in \St(m,d)$ are contained in the dominant subspace of $W$. 
Provided $Z(U_t)$ is full rank, it holds that 
$$WU_tZ(U_t) = U_tZ(U_t) = U_{t+1}R_{t+1} = U_tQ_{t+1}R_{t+1},$$
where $U_{t+1} R_{t+1}$ is the unique QR-factorisation of $U_tZ(U_t)$ and $Q_{t+1} R_{t+1}$ is the unique QR-factorisation of $Z(U_t)$.  
\end{lemma}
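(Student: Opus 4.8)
The plan is to establish the chain of four equalities one at a time, exploiting the spectral assumption for the first link and the uniqueness of the thin QR-factorisation for the last one.

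First I would show that $W$ acts as the identity on the relevant columns, i.e.\ $WU_t = U_t$. Since the columns of $U_t$ lie in the dominant subspace $\im(V_d)$, I can write $U_t = V_d A$ for some $A \in \R^{d \times d}$. Because the $d$ largest eigenvalues of $W$ are all equal to one, we have $WV_d = V_d$, and hence $WU_t = WV_dA = V_dA = U_t$. Right-multiplying by $Z(U_t)$ gives the first equality $WU_tZ(U_t) = U_tZ(U_t)$ directly. I would also record here that $U_tZ(U_t)$ has full column rank $d$, since $U_t \in \St(m,d)$ has rank $d$ and $Z(U_t)$ is full rank; this guarantees that the thin QR-factorisation used below exists and is unique.

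The second equality $U_tZ(U_t) = U_{t+1}R_{t+1}$ is then essentially a matter of definition: the update in Algorithm~\ref{alg:qrsync} computes the (unique) QR-factorisation of $WU_tZ(U_t)$, which by the first equality coincides with that of $U_tZ(U_t)$, so the pair $U_{t+1}R_{t+1}$ is exactly the QR-factorisation of $U_tZ(U_t)$. For the third equality I would introduce the square QR-factorisation $Z(U_t) = Q_{t+1}R_{t+1}$ with $Q_{t+1} \in \OO(d)$ and $R_{t+1}$ upper triangular with positive diagonal, which is unique because $Z(U_t)$ is $d \times d$ and full rank. Substituting yields $U_tZ(U_t) = (U_tQ_{t+1})R_{t+1}$. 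The key observation is that $U_tQ_{t+1} \in \St(m,d)$, since $(U_tQ_{t+1})^T(U_tQ_{t+1}) = Q_{t+1}^T U_t^T U_t Q_{t+1} = Q_{t+1}^TQ_{t+1} = \matI_d$. Hence $(U_tQ_{t+1})R_{t+1}$ is itself a valid thin QR-factorisation of $U_tZ(U_t)$, so by the uniqueness noted above it must coincide with $U_{t+1}R_{t+1}$, giving $U_{t+1} = U_tQ_{t+1}$ together with the same upper-triangular factor $R_{t+1}$ in both decompositions.

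The main subtlety --- really the only one --- is this uniqueness matching: one must check carefully that $U_tQ_{t+1}$ genuinely has orthonormal columns and that the upper-triangular-with-positive-diagonal factor appearing in the factorisation of $U_tZ(U_t)$ is the very same object $R_{t+1}$ produced by the factorisation of $Z(U_t)$. Once both conditions are verified, uniqueness of the thin QR-factorisation forces the two decompositions to agree factor by factor, which simultaneously justifies the shared notation $R_{t+1}$ in the statement and closes the chain $WU_tZ(U_t) = U_tZ(U_t) = U_{t+1}R_{t+1} = U_tQ_{t+1}R_{t+1}$.
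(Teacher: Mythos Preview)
Your proposal is correct and follows essentially the same approach as the paper: establish $WU_t = U_t$ from the spectral assumption, invoke the algorithmic definition for the second equality, and then match two thin QR-factorisations of $U_tZ(U_t)$ via uniqueness to conclude $U_{t+1} = U_tQ_{t+1}$ with coinciding upper-triangular factors. If anything, you are slightly more explicit than the paper in verifying that $U_tQ_{t+1}\in\St(m,d)$ and that $U_tZ(U_t)$ has full column rank, both of which are needed for the uniqueness step to apply.
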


\begin{proof}
    Since all the $d$ largest eigenvalues of $W$ are equal to one and the columns of $U_t$ are in the dominant subspace it holds that $WU_t = U_tU_t^TU_t = U_t$, which explains the first equality. While the  second equality follows by definition, the third equality remains to be proven. 
    Let $Q_{t+1}\tilde{R}_{t+1}$ be the unique QR-decomposition of $Z(U_t)$.
    We want to show that $U_{t+1} = U_t Q_{t+1}$ and $R_{t+1} = \tilde{R}_{t+1}$.
    From the QR-decomposition of $Z(U_t)$ it follows that $U_t Z(U_t) = U_t  Q_{t+1}\tilde{R}_{t+1}$, while from the QR-decomposition of $U_t Z(U_t)$ it follows that $U_t Z(U_t) = U_{t+1}R_{t+1}$. The uniqueness of the QR-decomposition implies $U_t  Q_{t+1} = U_{t+1}$ and $\tilde{R}_{t+1} = R_{t+1}$.
\end{proof}
Hence, under the assumptions in Lemma~\ref{lem:xxx1}, the columns of $U_{t'}$ span the dominant subspace of $W$ for any $t' \geq t$. %
The update in  Algorithm~\ref{alg:qrsync} reduces to the  form
\begin{align}
\label{eq:nisse:1}
Q_{t+1}R_{t+1} & \gets Z(U_t),\\
    U_{t+1} & \gets U_{t}Q_{t+1}, \label{eq:nisse:2}
\end{align}
i.e.~updating $U_t$ to obtain $U_{t+1}$ simplifies to multiplication of $U_t$ with the $Q$-matrix from the QR-factorisation of $Z(U_t)$.

\subsection{Choosing $Z(U_t)$ to Promote Sparsity}\label{sec:choosingZ}

We now turn our attention to the choice of $Z(U_t)$ in  Algorithm~\ref{alg:qrsync}. We know that, as long as $Z(U_t)$ is full rank in each iteration $t$, the columns of $U_t$ converge to the dominant subspace of $W$. In addition to our objective function $f$ in Problem~\eqref{eq:stiefel}, we now introduce the secondary objective
\begin{align}\label{eq:sec}
    g(U) = \sum_{i=1}^m \sum_{j=1}^d (U_{ij})^p =  \tr(U^T(U.^{\wedge (p-1)})),
\end{align}
where for $p$ being a positive integer the notation $U.^{\wedge p}$ means to raise each element in $U$ to the power $p$. For $U \in \St(m,d)$ and $p$ larger than $2$, the maximisation of $g$ promotes \emph{sparsity in a relaxed sense}. To be specific, if $p$ is odd,  a few larger elements (with value close to 1) lead to a larger value of $g$ compared to many smaller elements (with value close to 0), so that sparsity and non-negativity are simultaneously promoted. Analogously, if $p$ is even, a few elements with value closer to $\pm 1$ lead to a larger value of $g$ compared to many smaller elements close to $0$.

Let us consider the maximisation of 
\begin{align}
    g(UQ) = \tr(h(U,Q))
\end{align}
with respect to $Q$, where 
$h(U,Q) = (UQ)^T((UQ).^{\wedge (p-1)}),$
$Q \in \mathbb{O}^d$, and $U \in \St(m,d)$. This means that we want to rotate $U$ by $Q$ in such a way that the secondary objective $g$ is maximised after the rotation. By applying the rotation from the right, we ensure that the columns $U$ span the same space after the rotation, but a higher objective value is achieved for the secondary objective $g$ in~\eqref{eq:sec}.

The extrinsic gradient with respect to $Q$ (for $Q$ relaxed to be in $\R^{d \times d}$) of $g(UQ)$ at $Q = \matI$ is $ p{\cdot}h(U,\matI)$. 
Thus the (manifold) gradient of $g(UQ)$ at $Q = \matI$ is given by the projection of the extrinsic gradient at $Q = \matI$ onto the tangent space, which reads
\begin{align}
    \nabla_Q g(UQ)|_{Q = \matI} = p(h(U,\matI) - h^T(U,\matI)).
\end{align} 
For a small step size $\alpha$, we can perform a first-order approximation of gradient ascent if we choose $Z = \matI + \alpha p(h(U,\matI) - h^T(U,\matI))$, and then update $U$ in terms of the Q-matrix of the QR-factorisation of $Z$. 
Now, let us assume that we investigate the behaviour at the limit under the assumptions in Lemma~\ref{lem:xxx1},~i.e.~that the $d$ largest eigenvalues of $W$ are equal. In this context the Q-matrix of the QR-factorisation is a retraction~\cite{absil2009optimization} 
and serves as a first-order approximation of the exponential map. 
With that, the updates become
\begin{align}
    Q_{t+1}R_{t+1} &\gets \matI + \alpha_t (h(U_t,\matI) - h^T(U_t,\matI)),\label{eq:mangradasc}\\
     U_{t+1} &\gets U_t Q_{t+1}.
\end{align}
This choice of the matrix $Z(U_t)$ ensures that it is full rank, since adding the identity matrix and a skew-symmetric matrix results in a full rank matrix. Hence, for sufficiently small step size $\alpha$, we iteratively move in an ascent direction on $\mathbb{O}^d$ by utilising the QR-retraction.

\section{Application to Permutation Synchronisation}\label{sec:permsync}
\subsection{Permutation Synchronisation in a Nutshell} Permutation synchronisation is a procedure to improve matchings between multiple objects~\cite{Huang:2013uk,Pachauri:2013wx}, and related concepts have been utilised to address diverse tasks, such as multi-alignment~\cite{bernard2015solution,arrigoni2016spectral,huang2019learning}, multi-shape matching~\cite{huang2019tensor,huang2020consistent,gao2021isometric}, multi-image matching~\cite{zhou2015multi,tron2017fast,bernard2019hippi,birdal2019probabilistic,birdal2021quantum}, or multi-graph matching~\cite{yan2016short,bernard:2018,swoboda2019convex}, among many others. Permutation synchronisation refers to the process of establishing cycle consistency in the set of pairwise permutation matrices that encode correspondences between points in multiple objects. In computer vision there is a typical application where the points are feature descriptors or key-points, and the objects are images, as shown in Fig.~\ref{fig:house}. 

Let $k$ denote the number of objects, where each object $i$ contains $m_i$ points. For $\onevec_p$ being a $p$-dimensional vector of all ones, and vector inequalities being understood in an element-wise sense, let $P_{ij} \in \perm_{m_im_j} := \{ X \in \{0,1\}^{m_i \times m_j}: X \onevec_{m_j} \leq \onevec_{m_i}, \onevec_{m_i}^T X \leq \onevec_{m_j}^T\}$ be the partial permutation matrix that represents the correspondence between the $m_i$ points in object $i$ and the $m_j$ points in object $j$. In the case of \emph{bijective} permutations, the set of pairwise permutations $\mathcal{P} := \{P_{ij}\}_{i,j=1}^k$ is said to be cycle-consistent if for all $i,j,\ell$ it holds that $P_{i\ell} P_{\ell j} = P_{ij}$. 

We define the set of %
partial permutation matrices with full row-rank as $\overline{\perm}_{m_i d} := \{ X \in \perm_{m_id}: X \onevec_{d} = \onevec_{m_i}\}$, where $d$ denotes the total number of distinct points across all objects. Cycle consistency is known to be equivalent to the existence of so-called object-to-universe matchings $\mathcal{U} := \{P_i \in \overline{\perm}_{m_i d}\}_{i=1}^k$ such that for all $i,j$ we can write $P_{ij} = P_i P_j^T$ (see~\cite{Huang:2013uk,Pachauri:2013wx} for details).
The object-to-universe characterisation of cycle consistency is also valid for the case of non-bijective (i.e.~partial) permutations (see~\cite{tron2017fast,bernard2019synchronisation} for details).

Given the noisy (i.e.~not cycle-consistent) set of pairwise permutations $\mathcal{P} = \{P_{ij}\}_{i,j=1}^k$, permutation synchronisation can be phrased as the optimisation problem
\begin{align}\label{eq:permsync}
    \argmax_{\{P_i \in \overline{\perm}_{m_i d}\}}~\sum_{i,j} \tr(P_{ij}^T P_i P_j^T) \quad \Leftrightarrow \quad
    \argmax_{P \in \mathbb{U}}~\tr(P^T W P)\,,
\end{align}
where for $m := \sum_i m_i$  we define the set $\mathbb{U} := \overline{\perm}_{m_1d} \times \ldots \times \overline{\perm}_{m_kd} \subset \R^{m \times d}$, the $(m {\times} d)$-dimensional block matrix $P = [P_1^T,\ldots,P_k^T]^T$, and the block matrix $W := [P_{ij}]_{ij} \in \R^{m \times m}$ in order to allow for a compact matrix representation of the objective.

\subsection{Proposed Permutation Synchronisation Approach} The core idea of existing spectral permutation synchronisation approaches~\cite{Pachauri:2013wx,shen2016normalized,Maset:YO8y6VRb} is to replace the feasible set $\mathbb{U}$ in Problem~\eqref{eq:permsync} with the Stiefel manifold $\St(m,d)$, so that we obtain an instance of Problem~\eqref{eq:stiefel}. We utilise Algorithm~\ref{alg:qrsync} to obtain a (globally optimal) solution $U^* \in \St(m,d)$ of this spectral formulation. In our algorithm we choose $p=3$ to promote sparsity and non-negativity  in the resulting $U^*$ via the function $g(U) = \tr(U^T(U.^{\wedge 2})) = \sum_{i=1}^m \sum_{j=1}^d U_{ij}^3$. With that, in addition to $U^*$ being an orthogonal matrix, it contains few large elements that are close to 1 and many smaller elements that are close to 0. As such, we can readily project the matrix $U^*$ onto the set $\mathbb{U}$ in terms of a Euclidean projection. The Euclidean projection is given by projecting each of the $k$ blocks of $U^* = [U_1^{*T}, \ldots, U_k^{*T}]^T$ individually onto the set of %
partial permutations,~i.e.~$P_i = \proj_{\overline{\perm}_{m_id}}(U_i^*) = \argmax_{X \in \overline{\perm}_{m_id}} \tr(X^T U_i^*)$, see e.g.~\cite{bernard2019synchronisation},
which amounts to a (partial) linear assignment problem~\cite{Munkres:1957ju,Bertsekas:1998vt} that we solve based on the efficient implementation in~\cite{bernard2016fast}.

\subsection{Experimental Results}\label{sec:exp}
We experimentally compare our proposed approach with various methods for permutation synchronisation and perform an evaluation on both real and synthetic datasets. In particular, our comparison includes two existing spectral approaches, namely \textsc{MatchEig}~\cite{Maset:YO8y6VRb} and \textsc{Spectral}~\cite{Pachauri:2013wx}, where for the latter we use the efficient implementation from the authors of~\cite{zhou2015multi}. In addition, we also compare against the alternating minimisation method \textsc{MatchALS}~\cite{zhou2015multi}, and against the non-negative matrix factorisation approach \textsc{NmfSync}~\cite{bernard2019synchronisation}. To emphasise that the methods \textsc{MatchEig} and \textsc{MatchALS} do not guarantee cycle consistency (but instead  aim to improve the initial matchings), in all plots we show  results of respective methods as dashed lines.
We use the fscore to measure the quality of obtained multi-matchings, which is defined as the fraction $f = \frac{2 \cdot p \cdot r}{p + r}$, where $p$ and $r$ denote the precision and recall, respectively. All experiments are run on a Macbook Pro (2.8 GHz quad core i7, 16 GB RAM), where for $\epsilon = 10^{-5}$ we use $f(U_t)/f(U_{t+1})  \geq 1-\epsilon$  as convergence criterion in Algorithm~\ref{alg:qrsync}, and a step size of $\alpha_t = \| h(U_t,\matI) - h^T(U_t,\matI)\|_{\infty}^{-1}$ in~\eqref{eq:mangradasc}.

\textbf{Real data.} In this experiment we use the CMU house image sequence~\cite{cmuHouse} comprising $111$ frames within the  
experimental protocol of~\cite{Pachauri:2013wx}.
We generate a sequence of permutation synchronisation problem instances with a gradually increasing number of objects $k$ by extracting respective pairwise matchings for $k$ objects. To this end, we vary $k$ from $20$ to $111$ and sample the pairwise matchings evenly-spaced from the $111 \times 111$ pairwise matchings.
Quantitative results in terms of the fscore, the objective value of Problem~\eqref{eq:permsync}, and the runtimes are shown in Fig.~\ref{fig:housequant}, in which the individual problem instances vary along the horizontal axis. We can see that our proposed method dominates other approaches in terms of the fscore and the objective value (the reported objective values are divided by $k^2$ to normalise the scale in problems of different sizes), while being among the fastest. Note that we do not report the objective value for $\textsc{MatchEig}$ and $\textsc{MatchALS}$, since they do not lead to cycle-consistent matchings so that the obtained solution does not lie in the feasible set  of Problem~\eqref{eq:permsync}.
Qualitative results of the matching between one pair of images for $k=111$ are shown in Fig.~\ref{fig:house}.  As expected, our proposed approach clearly outperforms the \textsc{Spectral} baseline, since our  method is guaranteed to  converge to the same same subspace that is spanned by the spectral solution, while at the same time providing a sparser and less negative solution that is thereby closer to the feasible set $\mathbb{U}$.
\begin{figure*}[t]
\centering
    \centerline{\includegraphics[width=7.5cm]{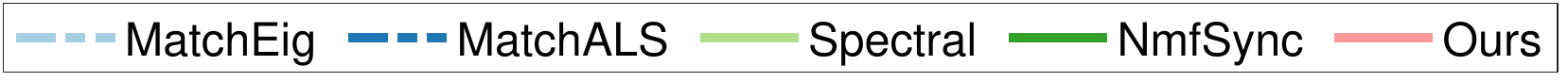}}
     \centerline{
    \begin{tabular}{ccc}%
     \includegraphics[width=0.33\linewidth]{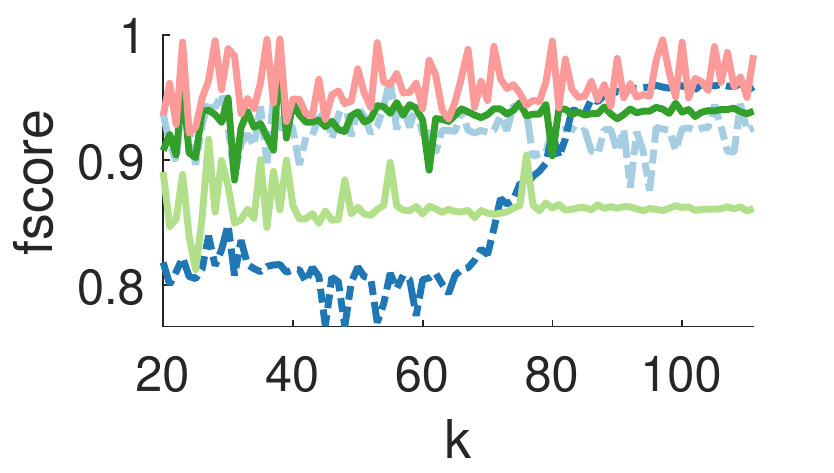}  & \includegraphics[width=0.33\linewidth]{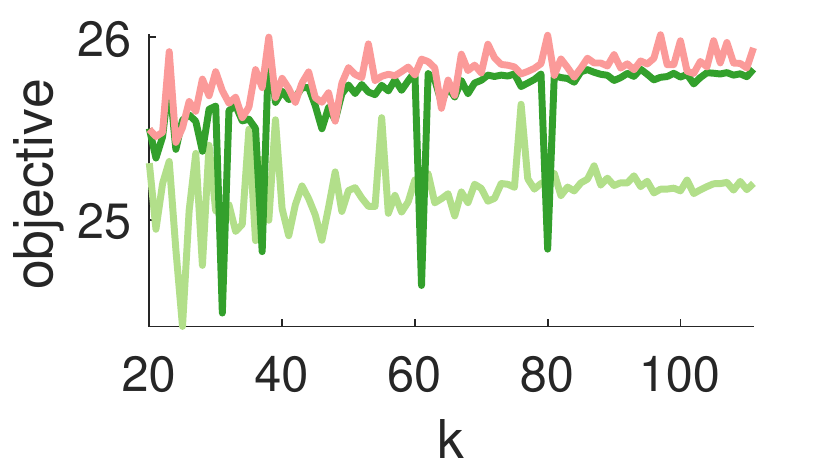} & \includegraphics[width=0.33\linewidth]{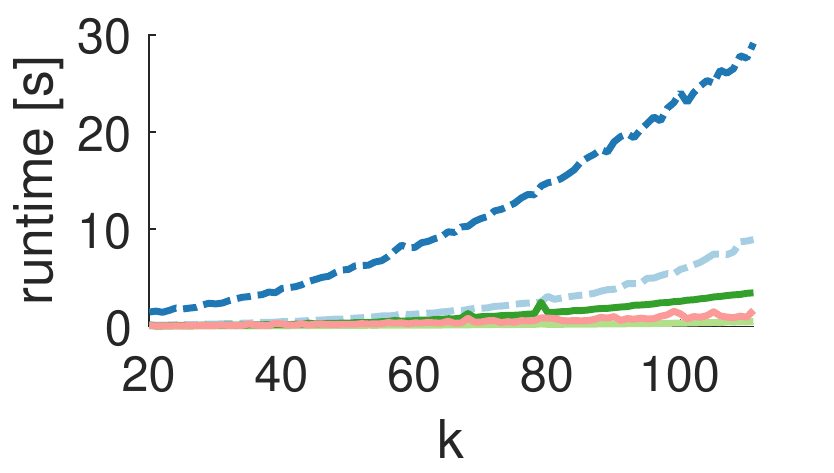}%
    \end{tabular}
    }
    \caption{Quantitative results on the CMU house sequence in terms of the fscore ($\uparrow$), objective value of Problem~\eqref{eq:permsync} ($\uparrow$), and runtime ($\downarrow$). The individual instances of permutation synchronisation problems vary along the horizontal axis. Methods that do not guarantee cycle consistency are shown as dashed lines.}
    \label{fig:housequant} 
\end{figure*}

\newcommand{\figScale}{0.47\linewidth}
\begin{figure}[ht!] 
\centering
    \begin{tabular}{cc}%
     \textsc{Input} & \textsc{MatchEig}\\
     \includegraphics[width=\figScale]{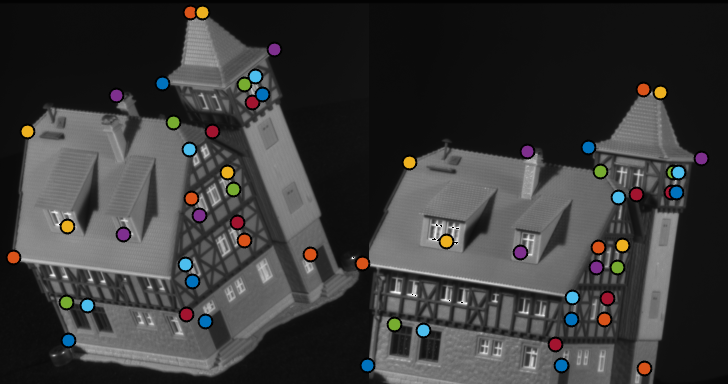} & \includegraphics[width=\figScale]{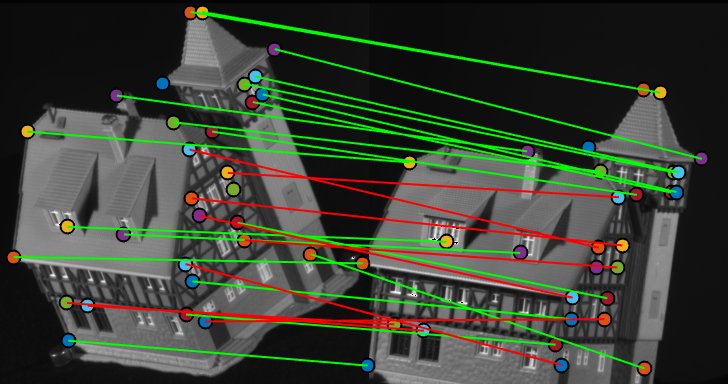}\\
     \textsc{MatchALS}& \textsc{Spectral}\\
     \includegraphics[width=\figScale]{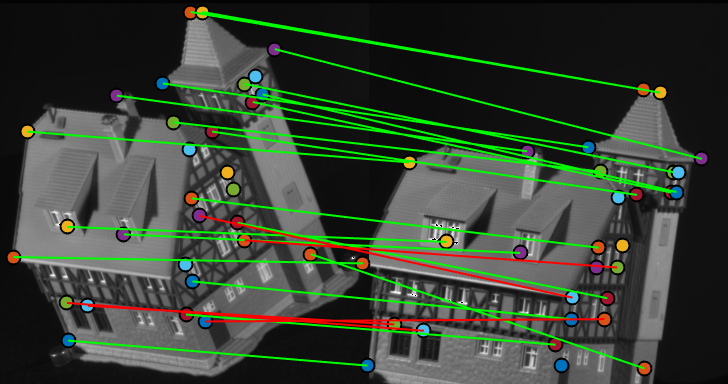} & \includegraphics[width=\figScale]{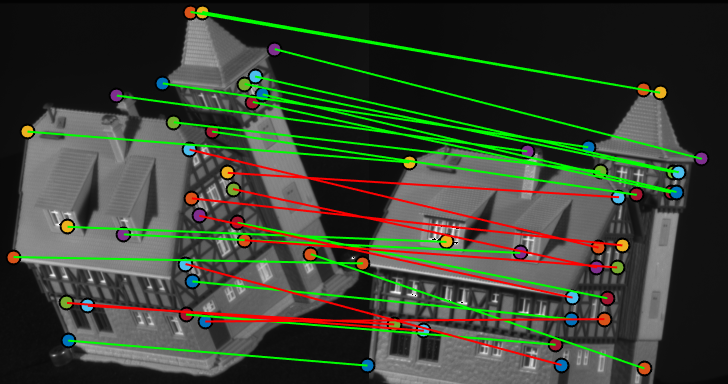}\\
     \textsc{NmfSync} & \textsc{Ours}\\
     \includegraphics[width=\figScale]{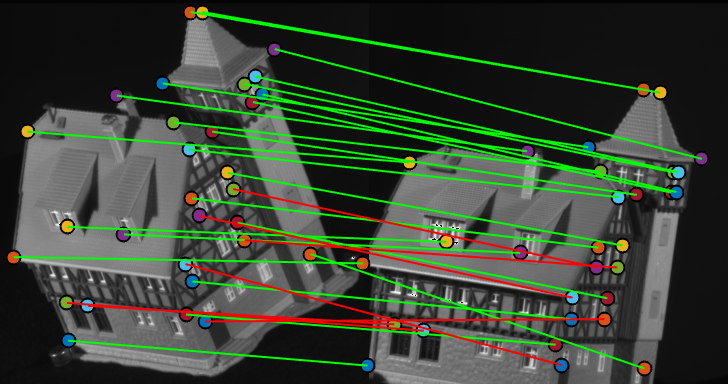} & \includegraphics[width=\figScale]{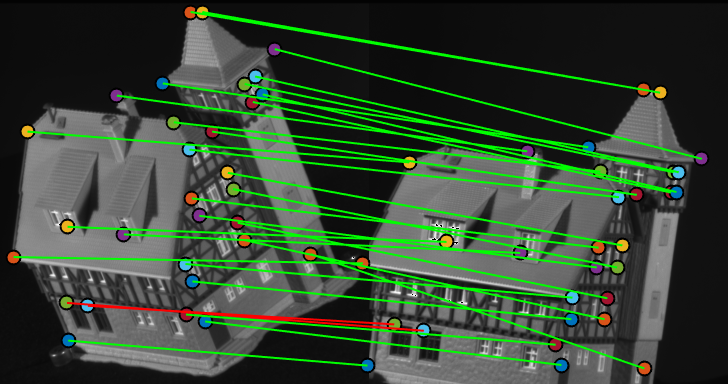}%
    \end{tabular}
    \caption{Comparison of  matchings between the first and last image of the CMU house sequence obtained by several methods.
    The colour of the dots indicates the ground truth correspondence, and the lines show the obtained matchings (green:~correct, red:~wrong). Overall, our approach obtains the best matchings, see also Fig.~\ref{fig:housequant}.}
    \label{fig:house}
\end{figure}

\textbf{Synthetic data.} We reproduce the procedure described  in~\cite{bernard2019synchronisation} for generating synthetic instances for the synchronisation of partial permutations. Four different parameters are considered for generating a problem instance: the universe size $d$, the number of objects $k$ that are to be matched, the observation rate $\rho$, and the error rate $\sigma$ (see~\cite{bernard2019synchronisation} for details).  %
One of these parameters varies in each experimental setting, while the others are kept fixed. Each individual experiment is repeated $5$ times with different random seeds. In Fig.~\ref{fig:synthetic} we compare the performance of \textsc{MatchEig}~\cite{Maset:YO8y6VRb}, \textsc{MatchALS}~\cite{zhou2015multi}, \textsc{Spectral}~\cite{Pachauri:2013wx}, \textsc{NmfSync}~\cite{bernard2019synchronisation} and \textsc{Ours}.
The first row shows the fscore, and the second row the respective runtimes. Note that we did not run \textsc{MatchALS} on the larger instances since the runtime is prohibitively long. The methods \textsc{MatchEig} and \textsc{MatchALS} do not guarantee cycle consistency, and are thus shown as dashed lines. It can be seen that in most settings our method obtains superior performance in terms of the fscore, while being almost as fast as the most efficient methods that are also based on a spectral relaxation (\textsc{MatchEig} and \textsc{Spectral}).
\newcommand{\figWidth}{3.7cm}
\begin{figure}[ht!] 
    \centerline{\includegraphics[width=7.5cm]{images/sparseStiefel_synthetic_legend.pdf}}
     \centerline{ 
        \subfigure{\includegraphics[width=\figWidth]{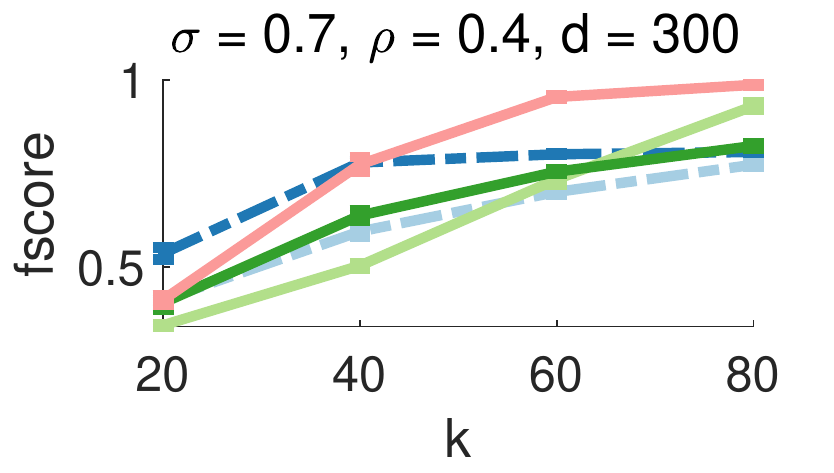}}%
        \subfigure{\includegraphics[width=\figWidth]{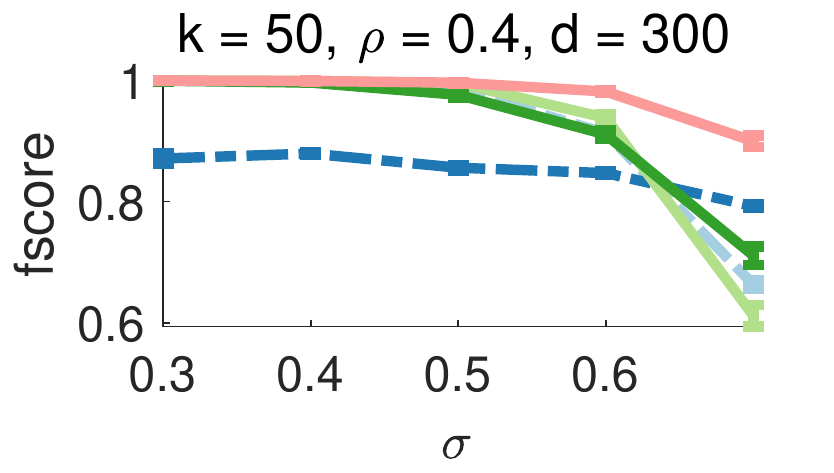}}%
        \subfigure{\includegraphics[width=\figWidth]{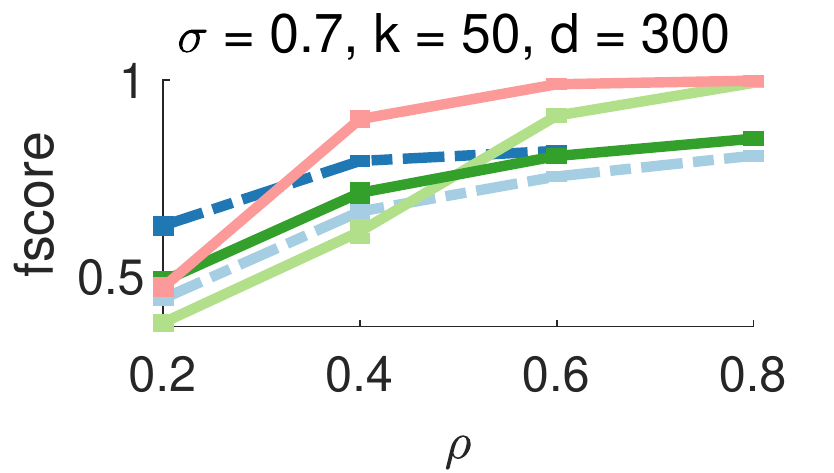}}%
        \subfigure{\includegraphics[width=\figWidth]{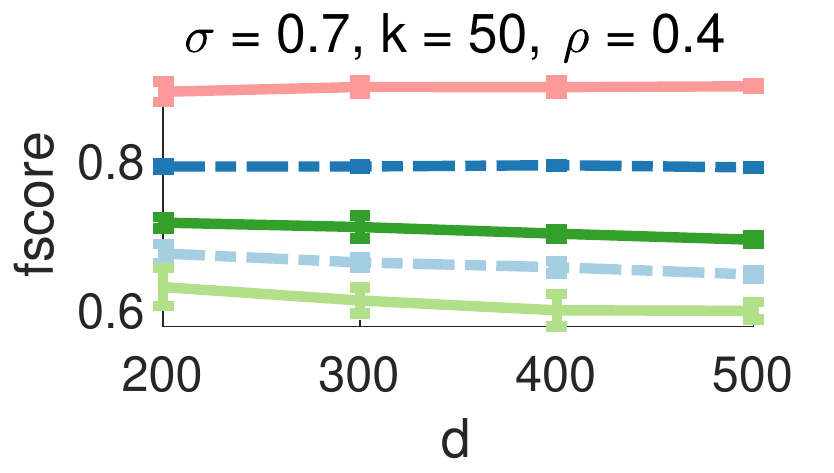}}%
      }%
    \centerline{%
        \subfigure{\includegraphics[width=\figWidth]{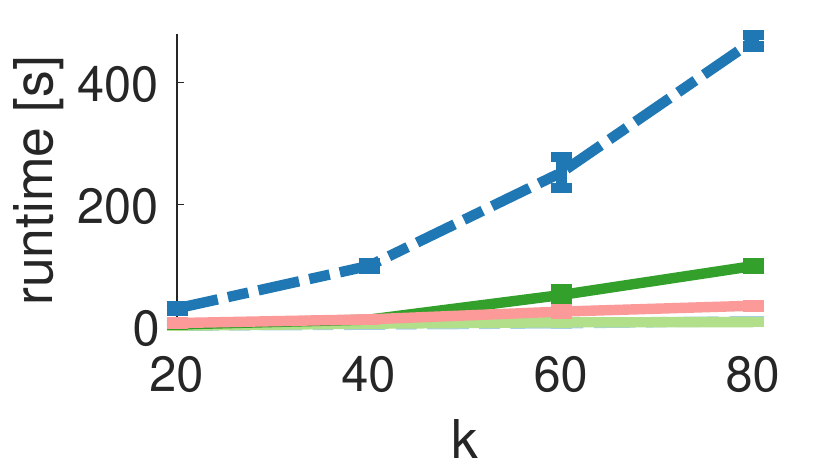}}%
        \subfigure{\includegraphics[width=\figWidth]{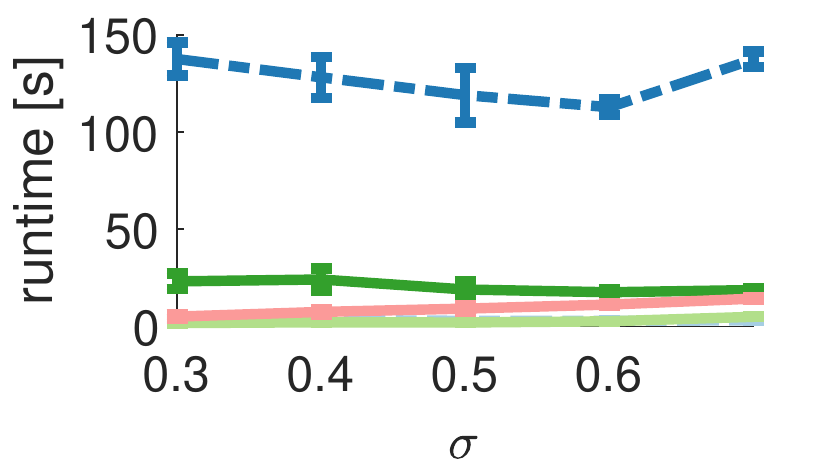}}%
        \subfigure{\includegraphics[width=\figWidth]{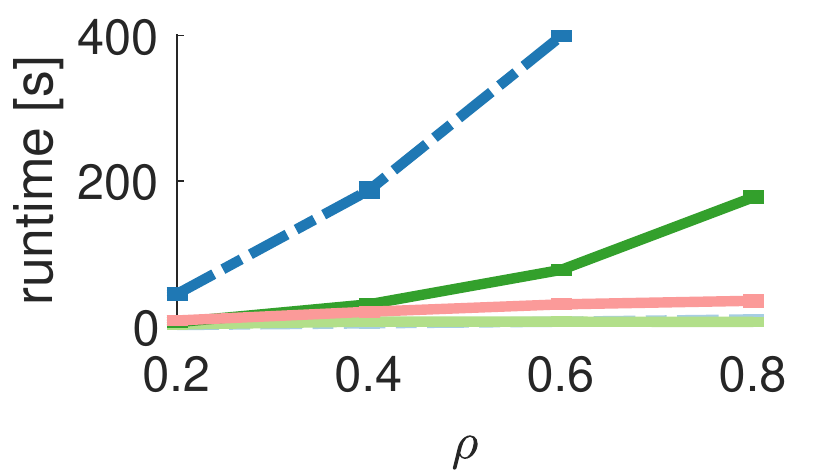}}%
        \subfigure{\includegraphics[width=\figWidth]{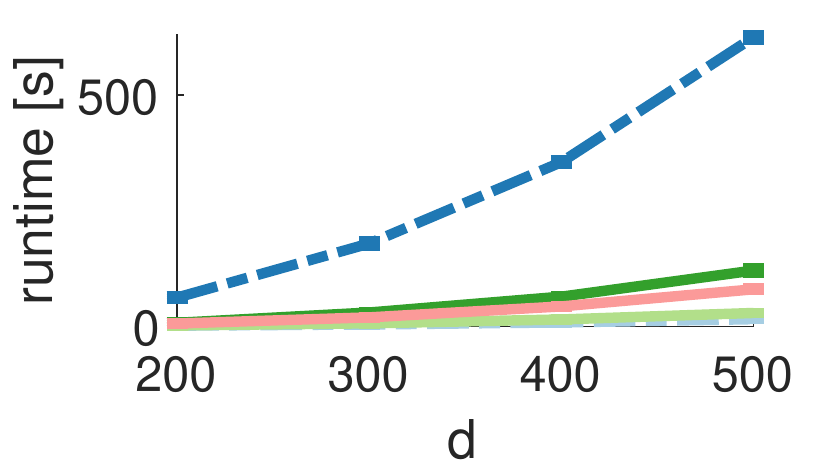}}%
      }%
    \caption{Experimental comparison of permutation synchronisation methods on synthetic data. Each column shows a different varying parameter. The first row shows the fscore ($\uparrow$) and the second row the runtime ($\downarrow$). Methods that do not guarantee cycle consistency are shown with dashed lines. Our method guarantees cycle consistency and leads to higher fscores in most settings.} 
    \label{fig:synthetic}
\end{figure}

\section{Discussion and Limitations}\label{sec:lim}
Our algorithm has several favourable properties: it finds a globally optimal solution to Problem~\eqref{eq:stiefel}, it is computationally efficient, it has the same convergence behaviour as the Orthogonal Iteration algorithm~\cite{golub2013matrix}, and promotes (approximately) sparse solutions. Naturally, since our solution is globally optimal (with respect to Problem~\eqref{eq:stiefel}) and thus converges to $\im(V_d)$, the amount of sparsity that we can achieve is limited by the sparsest orthogonal basis that spans $\im(V_d)$.  In turn, we rather interpret sparsity in some looser sense, meaning that there are few elements that are large, whereas most elements are close to zero. Furthermore, since the secondary sparsity-promoting problem (with objective in~\eqref{eq:sec}) is generally non-convex, we cannot guarantee that we attain its global optimum.

Since permutation synchronisation was our primary motivation it is the main focus of this paper. In the case of permutations, the assumption that the eigenvalues corresponding to the $V_d$ are equal (as we explain in Sec.~\ref{sec:choosingZ}) is reasonable, since this must hold for cycle-consistent bijective matchings. However, broadening the scope of our approach to other problems in which this assumption may not be valid will require further theoretical analysis and possibly a different strategy for choosing the matrix $Z(U_t)$. Studying the universality of the proposed method, as well as analysing different convergence criteria and different matrix functions $Z(U_t)$ are open problems that we leave for future work.

The proposed method comprises a variation of a well-known iterative procedure for computing the dominant subspace of a matrix. The key component in this procedure is the QR-factorisation, which is differentiable almost everywhere and is therefore well-suited to  be applied within a differentiable programming context (e.g.~for end-to-end training of neural networks). 

\section{Conclusion}
We propose an efficient algorithm to find a matrix that forms an orthogonal basis of the dominant subspace of a given matrix $W$ while additionally promoting sparsity and non-negativity.
Our procedure inherits favourable properties from the Orthogonal Iteration algorithm, namely it is simple to implement, converges almost everywhere, and is computationally efficient. Moreover, our method is designed to generate sparser solutions compared to the Orthogonal Iteration algorithm. This is achieved by rotating our matrix of interest in each iteration by a matrix corresponding to the gradient of a secondary objective that promotes sparsity (and optionally non-negativity).

The considered problem setting is relevant for various spectral formulations of difficult combinatorial problems, such as they occur in applications like multi-matching, permutation synchronisation or assignment problems. Here, the combination of orthogonality, sparsity and non-negativity is desirable, since these are properties that characterise binary matrices such as permutation matrices.
Experimentally we show that our proposed method outperforms existing methods in the context of partial permutation synchronisation, while at the same time having favourable a runtime.

\subsection*{Broader Impact}\label{sec:impact}
The key contribution of this paper is an effective and efficient optimisation algorithm for addressing sparse optimisation problems over the Stiefel manifold. Given the fundamental nature of our contribution, we do not see any direct ethical concerns or negative societal impacts related to our work.

Overall, there are numerous opportunities to use the proposed method for various types of multi-matching problems over networks and graphs.
The problem addressed is highly relevant within the fields of machine learning (e.g.~for data canonicalisation to faciliate efficient learning with non-Euclidean data), computer vision (e.g.~for 3D reconstruction in structure from motion, or image alignment), computer graphics (e.g.~for bringing 3D shapes into correspondence) and other related areas. 

The true power of permutation synchronisation methods appears when the number of objects to synchronise is large.
With large quantities of data
becoming increasingly available,
the introduction of our scalable optimisation procedure
that can account for orthogonality -- while promoting sparsity and non-negativity -- is  an important contribution to the synchronisation community on the one hand. Moreover, on the other hand,  it has the potential to have an impact on more general non-convex optimisation problems, particularly in the context of spectral relaxations of difficult and large combinatorial problems.

\subsection*{Acknowledgement}
JT was supported by the Swedish Research Council (2019-04769).

{
\small
\bibliographystyle{ieee}
\bibliography{references}

\begin{thebibliography}{10}\itemsep=-1pt

\bibitem{cmuHouse}
{CMU/VASC} image database.
\newblock
  \url{http://www.cs.cmu.edu/afs/cs/project/vision/vasc/idb/www/html/motion/}.

\bibitem{absil2007trust}
P.-A. Absil, C.~G. Baker, and K.~A. Gallivan.
\newblock Trust-region methods on riemannian manifolds.
\newblock {\em Foundations of Computational Mathematics}, 7(3):303--330, 2007.

\bibitem{absil2009optimization}
P.-A. Absil, R.~Mahony, and R.~Sepulchre.
\newblock {\em Optimization algorithms on matrix manifolds}.
\newblock Princeton University Press, 2009.

\bibitem{absil2002grassmann}
P.-A. Absil, R.~Mahony, R.~Sepulchre, and P.~Van~Dooren.
\newblock A grassmann--rayleigh quotient iteration for computing invariant
  subspaces.
\newblock {\em SIAM review}, 44(1):57--73, 2002.

\bibitem{arrigoni2016spectral}
F.~Arrigoni, B.~Rossi, and A.~Fusiello.
\newblock Spectral synchronization of multiple views in se (3).
\newblock {\em SIAM Journal on Imaging Sciences}, 9(4):1963--1990, 2016.

\bibitem{bernard:2018}
F.~Bernard, C.~Theobalt, and M.~Moeller.
\newblock {DS*: Tighter Lifting-Free Convex Relaxations for Quadratic Matching
  Problems}.
\newblock In {\em CVPR}, 2018.

\bibitem{bernard2015solution}
F.~Bernard, J.~Thunberg, P.~Gemmar, F.~Hertel, A.~Husch, and J.~Goncalves.
\newblock A solution for multi-alignment by transformation synchronisation.
\newblock In {\em CVPR}, 2015.

\bibitem{bernard2019synchronisation}
F.~Bernard, J.~Thunberg, J.~Goncalves, and C.~Theobalt.
\newblock Synchronisation of partial multi-matchings via non-negative
  factorisations.
\newblock {\em Pattern Recognition}, 92:146--155, 2019.

\bibitem{bernard2019hippi}
F.~Bernard, J.~Thunberg, P.~Swoboda, and C.~Theobalt.
\newblock Hippi: Higher-order projected power iterations for scalable
  multi-matching.
\newblock In {\em CVPR}, 2019.

\bibitem{bernard2016fast}
F.~Bernard, N.~Vlassis, P.~Gemmar, A.~Husch, J.~Thunberg, J.~Goncalves, and
  F.~Hertel.
\newblock Fast correspondences for statistical shape models of brain
  structures.
\newblock In {\em Medical Imaging 2016: Image Processing}, volume 9784, page
  97840R. International Society for Optics and Photonics, 2016.

\bibitem{Bertsekas:1998vt}
D.~P. Bertsekas.
\newblock {Network Optimization: Continuous and Discrete Models}.
\newblock Athena Scientific, 1998.

\bibitem{birdal2021quantum}
T.~Birdal, V.~Golyanik, C.~Theobalt, and L.~J. Guibas.
\newblock Quantum permutation synchronization.
\newblock In {\em CVPR}, 2021.

\bibitem{birdal2019probabilistic}
T.~Birdal and U.~Simsekli.
\newblock Probabilistic permutation synchronization using the riemannian
  structure of the birkhoff polytope.
\newblock In {\em CVPR}, 2019.

\bibitem{manopt}
N.~Boumal, B.~Mishra, P.-A. Absil, and R.~Sepulchre.
\newblock {M}anopt, a {M}atlab toolbox for optimization on manifolds.
\newblock {\em Journal of Machine Learning Research}, 15(42):1455--1459, 2014.

\bibitem{breloy2021majorization}
A.~Breloy, S.~Kumar, Y.~Sun, and D.~P. Palomar.
\newblock Majorization-minimization on the stiefel manifold with application to
  robust sparse pca.
\newblock {\em IEEE Transactions on Signal Processing}, 69:1507--1520, 2021.

\bibitem{chen2020proximal}
S.~Chen, S.~Ma, A.~Man-Cho~So, and T.~Zhang.
\newblock Proximal gradient method for nonsmooth optimization over the stiefel
  manifold.
\newblock {\em SIAM Journal on Optimization}, 30(1):210--239, 2020.

\bibitem{Cour:2006un}
T.~Cour, P.~Srinivasan, and J.~Shi.
\newblock {Balanced graph matching}.
\newblock {\em NIPS}, 2006.

\bibitem{de2004grassmann}
L.~De~Lathauwer, L.~Hoegaerts, and J.~Vandewalle.
\newblock A grassmann-rayleigh quotient iteration for dimensionality reduction
  in ica.
\newblock In {\em International Conference on Independent Component Analysis
  and Signal Separation}, pages 335--342. Springer, 2004.

\bibitem{edelman1998geometry}
A.~Edelman, T.~A. Arias, and S.~T. Smith.
\newblock The geometry of algorithms with orthogonality constraints.
\newblock {\em SIAM journal on Matrix Analysis and Applications},
  20(2):303--353, 1998.

\bibitem{gao2021isometric}
M.~Gao, Z.~L\"{a}hner, J.~Thunberg, D.~Cremers, and F.~Bernard.
\newblock Isometric multi-shape matching.
\newblock In {\em CVPR}, 2021.

\bibitem{golub2013matrix}
G.~H. Golub and C.~F. Van~Loan.
\newblock {\em Matrix computations}, volume~4.
\newblock JHU press, 2013.

\bibitem{huang2019tensor}
Q.~Huang, Z.~Liang, H.~Wang, S.~Zuo, and C.~Bajaj.
\newblock Tensor maps for synchronizing heterogeneous shape collections.
\newblock {\em ACM Transactions on Graphics (TOG)}, 38(4):1--18, 2019.

\bibitem{Huang:2013uk}
Q.-X. Huang and L.~Guibas.
\newblock {Consistent shape maps via semidefinite programming}.
\newblock In {\em Symposium on Geometry Processing}, 2013.

\bibitem{huang2020consistent}
R.~Huang, J.~Ren, P.~Wonka, and M.~Ovsjanikov.
\newblock Consistent zoomout: Efficient spectral map synchronization.
\newblock In {\em Computer Graphics Forum}, volume~39, pages 265--278. Wiley
  Online Library, 2020.

\bibitem{huang2019learning}
X.~Huang, Z.~Liang, X.~Zhou, Y.~Xie, L.~J. Guibas, and Q.~Huang.
\newblock Learning transformation synchronization.
\newblock In {\em CVPR}, 2019.

\bibitem{Leordeanu:2005ur}
M.~Leordeanu and M.~Hebert.
\newblock {A Spectral Technique for Correspondence Problems Using Pairwise
  Constraints}.
\newblock In {\em ICCV}, 2005.

\bibitem{li2021weakly}
X.~Li, S.~Chen, Z.~Deng, Q.~Qu, Z.~Zhu, and A.~Man-Cho~So.
\newblock Weakly convex optimization over stiefel manifold using riemannian
  subgradient-type methods.
\newblock {\em SIAM Journal on Optimization}, 31(3):1605--1634, 2021.

\bibitem{li2018global}
Y.~Li and Y.~Bresler.
\newblock Global geometry of multichannel sparse blind deconvolution on the
  sphere.
\newblock In {\em NeurIPS}, pages
  1132--1143, 2018.

\bibitem{lu2016convex}
C.~Lu, S.~Yan, and Z.~Lin.
\newblock Convex sparse spectral clustering: Single-view to multi-view.
\newblock {\em IEEE Transactions on Image Processing}, 25(6):2833--2843, 2016.

\bibitem{manton2002optimization}
J.~H. Manton.
\newblock Optimization algorithms exploiting unitary constraints.
\newblock {\em IEEE Transactions on Signal Processing}, 50(3):635--650, 2002.

\bibitem{Maset:YO8y6VRb}
E.~Maset, F.~Arrigoni, and A.~Fusiello.
\newblock {Practical and Efficient Multi-View Matching}.
\newblock In {\em ICCV}, 2017.

\bibitem{Munkres:1957ju}
J.~Munkres.
\newblock {Algorithms for the Assignment and Transportation Problems}.
\newblock {\em Journal of the Society for Industrial and Applied Mathematics},
  5(1):32--38, Mar. 1957.

\bibitem{ng2001spectral}
A.~Ng, M.~Jordan, and Y.~Weiss.
\newblock On spectral clustering: Analysis and an algorithm.
\newblock {\em NIPS}, 14:849--856,
  2001.

\bibitem{Pachauri:2013wx}
D.~Pachauri, R.~Kondor, and V.~Singh.
\newblock {Solving the multi-way matching problem by permutation
  synchronization}.
\newblock In {\em NIPS}, 2013.

\bibitem{Pardalos:1993uo}
P.~M. Pardalos, F.~Rendl, and H.~Wolkowicz.
\newblock {The Quadratic Assignment Problem - A Survey and Recent
  Developments.}
\newblock {\em DIMACS Series in Discrete Mathematics}, 1993.

\bibitem{qu2016finding}
Q.~Qu, J.~Sun, and J.~Wright.
\newblock Finding a sparse vector in a subspace: Linear sparsity using
  alternating directions.
\newblock {\em IEEE Transactions on Information Theory}, 62(10):5855--5880,
  2016.

\bibitem{qu2019geometric}
Q.~Qu, Y.~Zhai, X.~Li, Y.~Zhang, and Z.~Zhu.
\newblock Geometric analysis of nonconvex optimization landscapes for
  overcomplete learning.
\newblock In {\em International Conference on Learning Representations}, 2019.

\bibitem{qu2020finding}
Q.~Qu, Z.~Zhu, X.~Li, M.~C. Tsakiris, J.~Wright, and R.~Vidal.
\newblock Finding the sparsest vectors in a subspace: Theory, algorithms, and
  applications.
\newblock {\em arXiv preprint arXiv:2001.06970}, 2020.

\bibitem{shen2016normalized}
Y.~Shen, Q.~Huang, N.~Srebro, and S.~Sanghavi.
\newblock Normalized spectral map synchronization.
\newblock {\em NIPS},
  29:4925--4933, 2016.

\bibitem{song2015sparse}
J.~Song, P.~Babu, and D.~P. Palomar.
\newblock Sparse generalized eigenvalue problem via smooth optimization.
\newblock {\em IEEE Transactions on Signal Processing}, 63(7):1627--1642, 2015.

\bibitem{swoboda2019convex}
P.~Swoboda, D.~Kainmüller, A.~Mokarian, C.~Theobalt, and F.~Bernard.
\newblock A convex relaxation for multi-graph matching.
\newblock In {\em CVPR}, 2019.

\bibitem{tron2017fast}
R.~Tron, X.~Zhou, C.~Esteves, and K.~Daniilidis.
\newblock Fast multi-image matching via density-based clustering.
\newblock In {\em CVPR}, 2017.

\bibitem{umeyama1988eigendecomposition}
S.~Umeyama.
\newblock An eigendecomposition approach to weighted graph matching problems.
\newblock {\em IEEE transactions on pattern analysis and machine intelligence},
  10(5):695--703, 1988.

\bibitem{von2007tutorial}
U.~Von~Luxburg.
\newblock A tutorial on spectral clustering.
\newblock {\em Statistics and computing}, 17(4):395--416, 2007.

\bibitem{wang2017grassmannian}
Q.~Wang, J.~Gao, and H.~Li.
\newblock Grassmannian manifold optimization assisted sparse spectral
  clustering.
\newblock In {\em Proceedings of the IEEE Conference on Computer Vision and
  Pattern Recognition}, pages 5258--5266, 2017.

\bibitem{wen2013feasible}
Z.~Wen and W.~Yin.
\newblock A feasible method for optimization with orthogonality constraints.
\newblock {\em Mathematical Programming}, 142(1):397--434, 2013.

\bibitem{xue2020blind}
Y.~Xue, Y.~Shen, V.~Lau, J.~Zhang, and K.~B. Letaief.
\newblock Blind data detection in massive mimo via $\ell_3$-norm maximization
  over the stiefel manifold.
\newblock {\em IEEE Transactions on Wireless Communications}, 2020.

\bibitem{yan2016short}
J.~Yan, X.-C. Yin, W.~Lin, C.~Deng, H.~Zha, and X.~Yang.
\newblock A short survey of recent advances in graph matching.
\newblock In {\em Proceedings of the 2016 ACM on International Conference on
  Multimedia Retrieval}, pages 167--174, 2016.

\bibitem{zhai2020complete}
Y.~Zhai, Z.~Yang, Z.~Liao, J.~Wright, and Y.~Ma.
\newblock Complete dictionary learning via l4-norm maximization over the
  orthogonal group.
\newblock {\em Journal of Machine Learning Research}, 21(165):1--68, 2020.

\bibitem{zhang2019structured}
Y.~Zhang, H.-W. Kuo, and J.~Wright.
\newblock Structured local optima in sparse blind deconvolution.
\newblock {\em IEEE Transactions on Information Theory}, 66(1):419--452, 2019.

\bibitem{zhou2015multi}
X.~Zhou, M.~Zhu, and K.~Daniilidis.
\newblock Multi-image matching via fast alternating minimization.
\newblock In {\em ICCV}, 2015.

\bibitem{zhu2019linearly}
Z.~Zhu, T.~Ding, D.~Robinson, M.~Tsakiris, and R.~Vidal.
\newblock A linearly convergent method for non-smooth non-convex optimization
  on the grassmannian with applications to robust subspace and dictionary
  learning.
\newblock {\em NeurIPS},
  32:9442--9452, 2019.

\end{thebibliography}
}

\clearpage
\appendix

% Define chapter number
\newcommand{\snum}{A}

% Prepend supplemental chapter to equation list
\renewcommand{\theequation}{\snum\arabic{equation}}

\section{Proofs}
\subsection*{Proof of Lemma 1}
\begin{proof}
If the matrix $W$ is not symmetric, we can split $W$ into the sum of a symmetric part $W_s$ and skew-symmetric part $W_a$. It holds that $\tr(U^T W_a U) = \frac{1}{2}\tr(U^T W_a U)  + \frac{1}{2}\tr(U^T W_a^T U) =  \frac{1}{2}\tr(U^T W_a U)   -\frac{1}{2}\tr(U^T W_a U) = 0$. Further, if $W_s$ is not positive semidefinite, we can shift its eigenvalues via  $W_s {-} \lambda_{m}\matI_m = \tilde{W}$ to make it p.s.d. Since $\alpha \tr(U^T \matI_m U) = \alpha d$ is constant for any scalar $\alpha$, the term $\lambda_{m}\matI_m$ does not affect the optimisers of Problem~\eqref{eq:stiefel}. Thus,  we can replace $W$ in~\eqref{eq:stiefel} by the p.s.d. matrix $\tilde W$ without affecting the optimisers. 
\end{proof}

\subsection*{Proof of Lemma 2}

\begin{proof}
Consider the eigenvalue decomposition $(\Lambda, V)$ of $W$, i.e.~$W = V\Lambda V^T$, where $\Lambda = \diag(\lambda_1,\ldots,\lambda_m)$ contains the decreasingly ordered (nonnegative) eigenvalues on its diagonal and $V \in \St(m,m)$. For $U \in \St(m,d)$, this matrix can be written as the product of $V$ and another matrix $R \in \St(m,d)$, i.e., $U = VR$.
So, instead of optimising over $U$, we can optimise over $R$. Let the $i$-th row of $R$ be denoted as $r_i$ for $i = 1,2, \ldots, m$.  
It holds that $\tr(U^TWU) =  \tr(R^T\Lambda R) = \sum_{i=1}^m\lambda_i\|r_i\|_{2}^2$.
Thus, an equivalent formulation of Problem~\eqref{eq:stiefel} is the  optimisation problem 
\begin{align}
    \label{eq:1}
    & \max_{R = [r_1^T, r_2^T, \ldots, r_m^T]^T \in \St(m,d)} ~\sum_{i=1}^m\lambda_i\|r_i\|_{2}^2.
\end{align}
We observe that $0 \leq \|r_i\|_{2}^2 \leq 1$, and that $\sum_{i=1}^m\|r_i\|_{2}^2 = \tr(RR^T) = \tr(R^TR) = d$.  Hence, a relaxation to \eqref{eq:1} is given by
\begin{align}
    \label{eq:2}
    & \max_{p \in \R^{m}} ~\sum_{i=1}^m\lambda_ip_i, \\
   & ~~~\text{s.t.} \quad  0 \leq p_i \leq 1, \\
   \label{eq:4}
    &\quad \quad   \sum_{i=1}^m p_i = d.
\end{align}
This is a linear programming problem for which an optimal solution is given by $p_1 = \ldots = p_d = 1$ and $p_{d+1} = \ldots = p_m = 0$  (since the $\lambda_i$'s are provided in decreasing order).
Now we choose $R^* = [\matI_d, \mathbf{0}]^T \in \R^{m \times d}$, so that
\begin{equation}
\label{eq:5}
    r_1^* = e_1^T, r_2^* = e_2^T, \ldots, r_d^* = e_d^T \quad \text{and} \quad r_{d+1}^* = \ldots = r_{m}^* = \zerovec^T_d,
\end{equation}
where $e_i \in \R^{d}$ is the unit vector with element equal to $1$ at the $i$-th place.
We observe that for this choice $R^* \in \St(m,d)$, and that the objective value for \eqref{eq:1} is the same as the optimal value for the problem defined by \eqref{eq:2}-\eqref{eq:4}. Since the latter problem was a relaxation of the former problem, $R^*$ is an optimal solution to Problem~\eqref{eq:1}. 
The corresponding optimal $U$ for Problem~\eqref{eq:stiefel} is $U^* = VR^* = V_d$. The observation that for any $U' = V_d Q$ with $Q \in \OO(d)$  we have that $\tr(U'^T W U') = \tr(Q^T V_d^T W V_d Q) = \tr(V_d^T W V_d)$ concludes the proof.
\end{proof}

\section{Additional Experiments}
In the following we provide further evaluations of our proposed algorithm.

\subsection{Step Size and Comparison to Two-Stage Approaches}
In this section, on the one hand we experimentally confirm that our  approach of choosing the step size $\alpha_t$ (see Sec.~\ref{sec:exp}) is valid  and that
in practice
it is not necessary to perform line search. On the other hand, 
we verify that our 
proposed algorithm leads to results that are comparable to two-stage approaches derived from Lemma~\ref{lem:xxx1}. Such two-stage approaches
first determine the matrix $U_0 \in \im(V_d)$ that spans the $d$-dimensional dominant subspace of $W$, and subsequently utilise the updates in equations~\eqref{eq:nisse:1} and~\eqref{eq:nisse:2} in order to make $U_0$ sparser.
As explained, this corresponds to finding a matrix $Q \in \OO(d)$ 
that maximises our secondary objective 
\begin{align}\label{eq:secapp}
    g(U_0Q) = \sum_{i=1}^m \sum_{j=1}^d (U_0Q)_{ij}^p\,.
\end{align}
We compare our proposed algorithm to two different settings of  two-stage approaches:
\begin{enumerate}
    \item \textbf{Our algorithm as stage two.} Our proposed algorithm forms the second stage of a two-stage approach. To this end, in the first stage we use the Orthogonal Iteration algorithm~\cite{golub2013matrix} to find the matrix $V_d$ that spans the $d$-dimensional dominant subspace of $W$. Subsequently in the second stage, we initialise $U_0 \gets V_d$, and according to Lemma~\ref{lem:xxx1} we make use of the updates in equations~\eqref{eq:nisse:1} and~\eqref{eq:nisse:2} in order to make $U_t$ iteratively sparser. We consider two variants for the second stage: 
    \begin{enumerate}
        \item In the variant  denoted \textsc{Ours/2-stage} we run the second-stage updates exactly for the  number of iterations that the Orthogonal Iteration required in the first stage to find $V_d$ (with convergence threshold $\epsilon=10^{-5}$). We use the step size $\alpha_t$ as described in Sec.~\ref{sec:exp}. 
        \item  In the variant  denoted \textsc{Ours/2-stage/bt} we utilise  backtracking line search (as implemented in the ManOpt toolbox~\cite{manopt}) in order to find a suitable step size $\alpha_t$ in each iteration. Here, we run the algorithm until convergence w.r.t.~to $g$,~i.e.~until $g(U_tQ_t)/g(U_{t+1}Q_{t+1})\geq 1 - \epsilon$ for $\epsilon = 10^{-5}$.
    \end{enumerate}
    
    \item \textbf{Manifold optimisation as stage two.} Further, we consider the trust regions method~\cite{absil2007trust} to find a (local) maximiser of~\eqref{eq:secapp} in the second stage. Here, the optimisation over the Riemannian manifold $\OO(d)$ is performed using the ManOpt toolbox~\cite{manopt}. For the first stage, we consider three different initialisations for finding the matrix $V_d$ that spans the $d$-dimensional dominant subspace of $W$:  the Matlab functions \textit{eig()} and \textit{eigs()}, as well as our  implementation of the Orthogonal Iteration algorithm~\cite{golub2013matrix}. We call these methods \textsc{Eig+ManOpt}, \textsc{Eigs+ManOpt} and \textsc{OrthIt+ManOpt}, respectively.

\end{enumerate}

Results are shown in Figs.~\ref{fig:housequantapp} and ~\ref{fig:ablationapp} for the CMU house sequence and the synthetic dataset, respectively. We observe the following:
\begin{itemize}
    \item In terms of \textbf{solution quality} (fscore and objective), all considered methods are comparable in most cases. For the real dataset (Fig.~\ref{fig:housequantapp}) \textsc{Eigs+ManOpt} performs worse due to numerical reasons. For the largest considered permutation synchronisation problems (the right-most column in the synthetic data setting shown in Fig.~\ref{fig:ablationapp}) \textsc{Ours} leads to the best results on average.
    \item In terms of \textbf{runtime}, in overall \textsc{Ours} is among the fastest, considering both the real and the synthetic data experiments. In the real dataset, where $d=30$ is relatively small,
    \textsc{Eigs+ManOpt} is the fastest (but with poor solution quality), while \textsc{Eig+ManOpt} is the slowest (with comparable solution quality to \textsc{Ours}). 
    Methods that utilise the Orthogonal Iteration  have  comparable runtimes in the real data experiments. 
    
    Most notably, in the largest considered synthetic data setting (right-most column in Fig.~\ref{fig:ablationapp}) \textsc{Ours} is among the fastest (together with \textsc{Ours/2-stage}), while \textsc{Ours} has the largest fscore on average (as mentioned above) -- this indicates that \textsc{Ours} is particularly well-suited for permutation synchronisation problems with increasing size.
    \item Overall \textsc{Ours} is the simplest method, see Algorithm~\ref{alg:qrsync}: the solution is computed in one single stage rather than in two consecutive stages, and it does not require \textbf{line search}, as can be seen by comparing \textsc{Ours} with \textsc{Ours/2-stage/bt} across all experiments.
\end{itemize}

\begin{figure*}[t]
\centering
    \centerline{\includegraphics[width=0.9\linewidth]{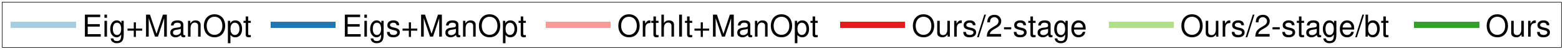}}%
     \centerline{
    \begin{tabular}{ccc}%
     \includegraphics[width=0.33\linewidth]{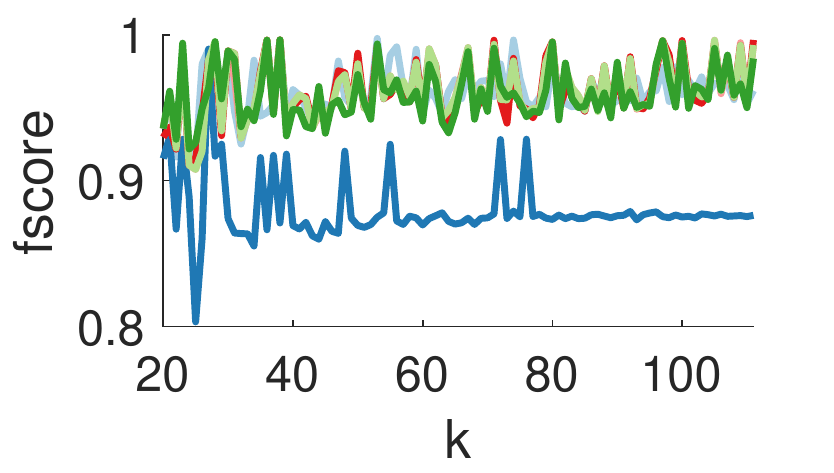}  & \includegraphics[width=0.33\linewidth]{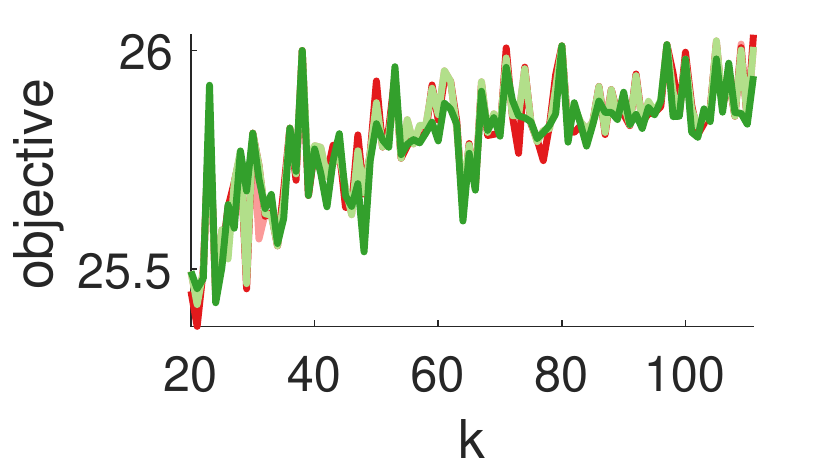} & \includegraphics[width=0.33\linewidth]{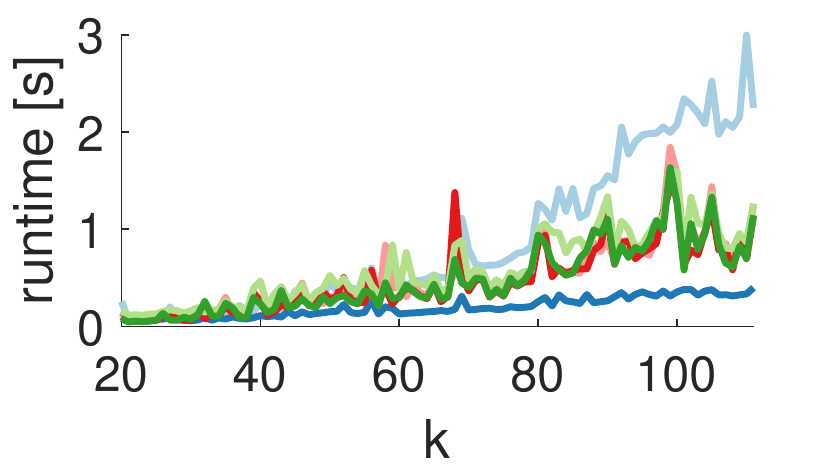}%
    \end{tabular}
    }
    \caption{Comparison of \textsc{Ours} to different two-stage approaches on permutation synchronisation problems from the CMU house sequence (see Sec.~\ref{sec:exp} for details). We consider the fscore ($\uparrow$), objective value ($\uparrow$), and runtime ($\downarrow$). The individual instances of permutation synchronisation problems vary along the horizontal axis. }
    \label{fig:housequantapp} 
\end{figure*}

\begin{figure}[ht!] 
    \centerline{\includegraphics[width=0.9\linewidth]{images_appendix/sparseStiefel_synthetic_ablation_legend.pdf}}
     \centerline{ 
        \subfigure{\includegraphics[width=\figWidth]{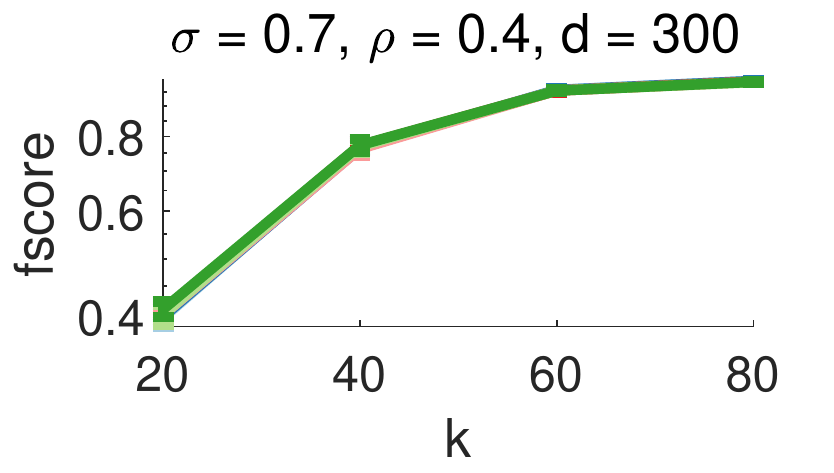}}%
        \subfigure{\includegraphics[width=\figWidth]{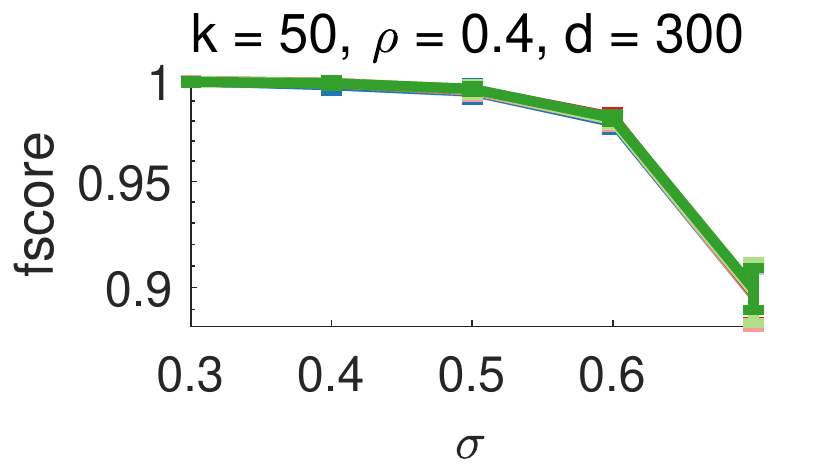}}%
        \subfigure{\includegraphics[width=\figWidth]{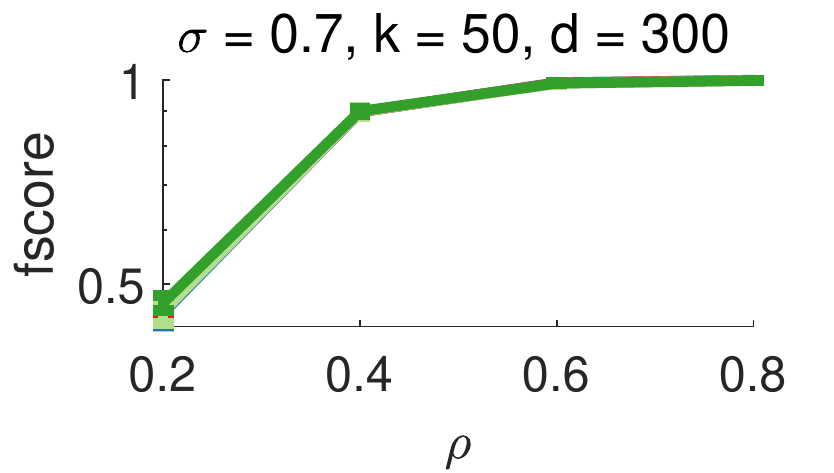}}%
        \subfigure{\includegraphics[width=\figWidth]{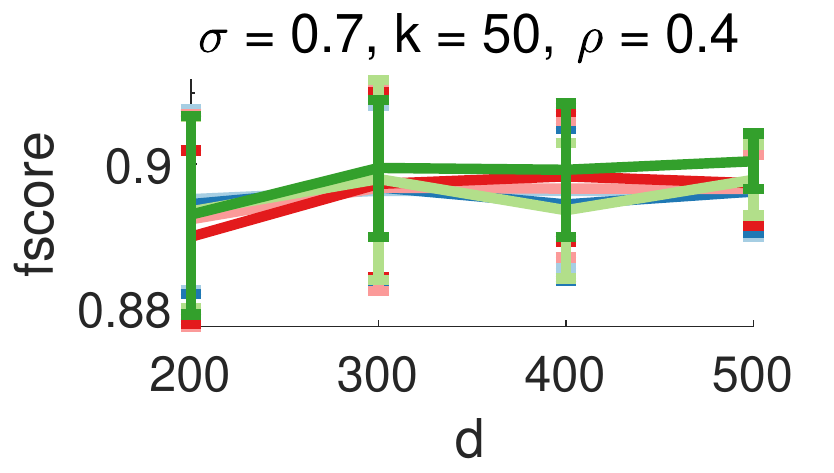}}%
      }%
    \centerline{%
        \subfigure{\includegraphics[width=\figWidth]{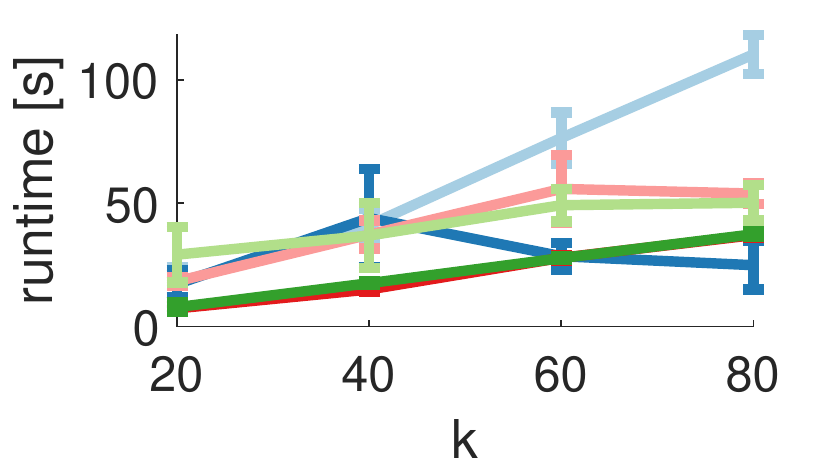}}%
        \subfigure{\includegraphics[width=\figWidth]{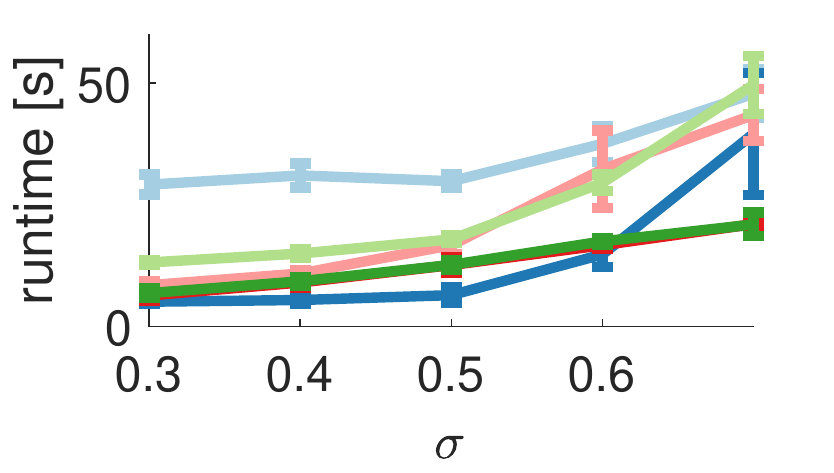}}%
        \subfigure{\includegraphics[width=\figWidth]{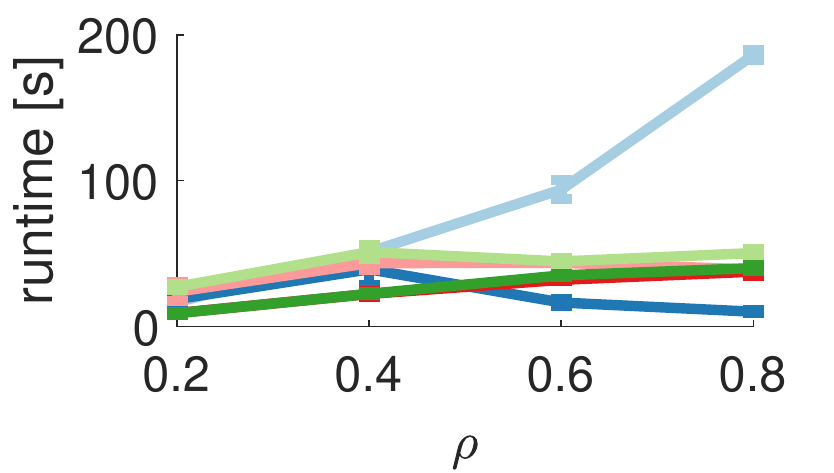}}%
        \subfigure{\includegraphics[width=\figWidth]{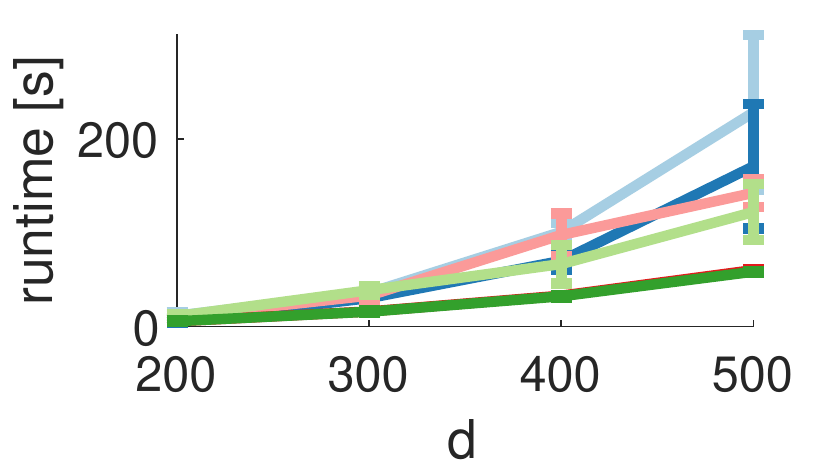}}%
      }%
    \caption{Comparison of \textsc{Ours} to different two-stage approaches on synthetic permutation synchronisation problems (see Sec.~\ref{sec:exp} for details).
    Each column shows a different varying parameter. The first row shows the fscore ($\uparrow$) and the second row the runtime ($\downarrow$). Note that  the right-most column shows the largest considered permutation synchronisation instances -- for these \textsc{Ours} obtains the best fscore while being among the fastest (together with \textsc{Ours/2-stage}).} 
    \label{fig:ablationapp}
\end{figure}

\subsection{Comparison to Riemannian Subgradient and Evaluation of  Different $p$}
In Fig.~\ref{fig:housequant2} we compare \textsc{Ours} with $p=3$ and $p=4$ to the Riemannian subgradient-type method (with QR-retraction) by Li et~al.~\cite{li2021weakly} with $\ell_1$-norm as sparsity-inducing penalty. In the qualitative results (bottom) we can observe that the Riemannian subgradient-type method and Ours ($p=4$) obtain sparse solutions with few elements with large absolute values (both positive and negative), whereas Ours ($p=3$) obtains a sparse and (mostly) nonnegative solution. Since for permutation synchronisation we are interested in nonnegative solutions, \textsc{Ours} with $p=3$ thus outperforms the two alternatives quantitatively (top).
\begin{figure*}[t!]
\centering
    \centerline{\includegraphics[width=0.65\linewidth]{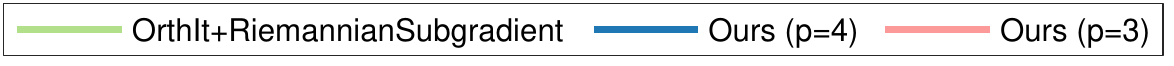}}%
     \centerline{
    \begin{tabular}{ccc}%
     \includegraphics[width=0.33\linewidth]{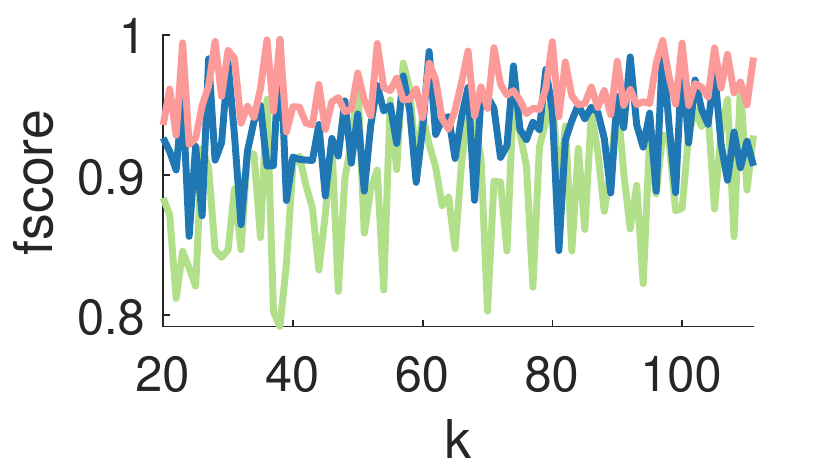}  & \includegraphics[width=0.33\linewidth]{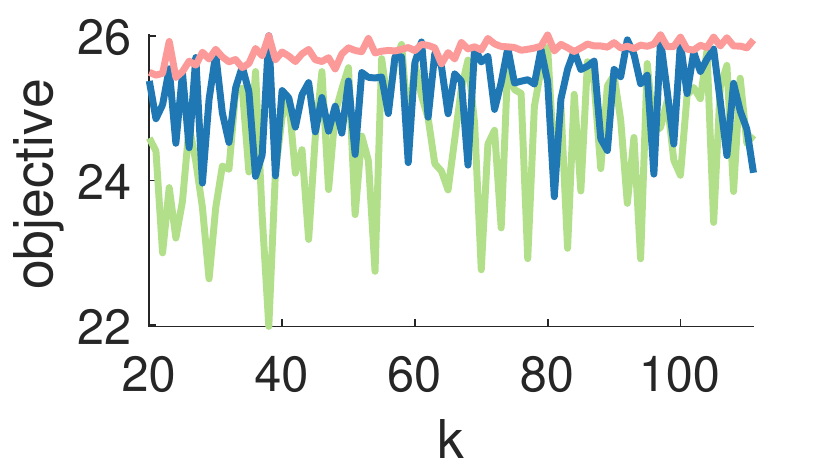} & \includegraphics[width=0.33\linewidth]{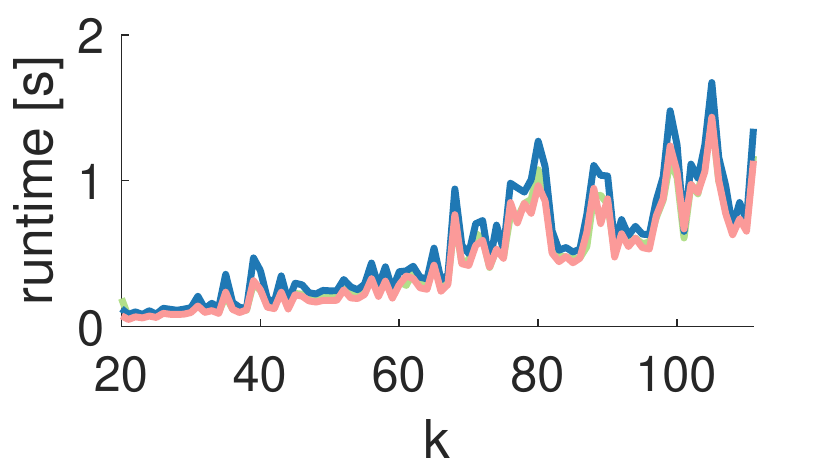}%
     \\ &&\\
      \includegraphics[width=0.32\linewidth]{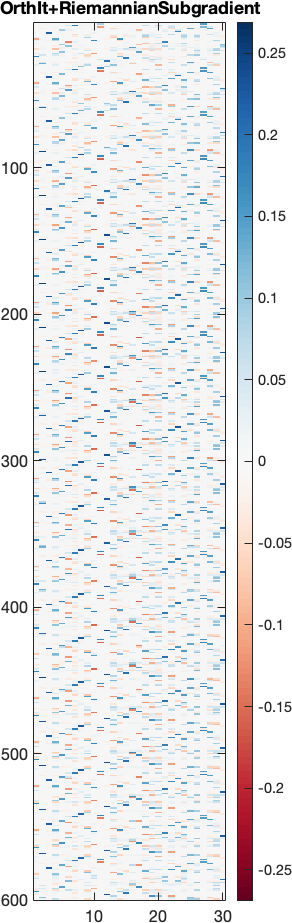}  & \includegraphics[width=0.32\linewidth]{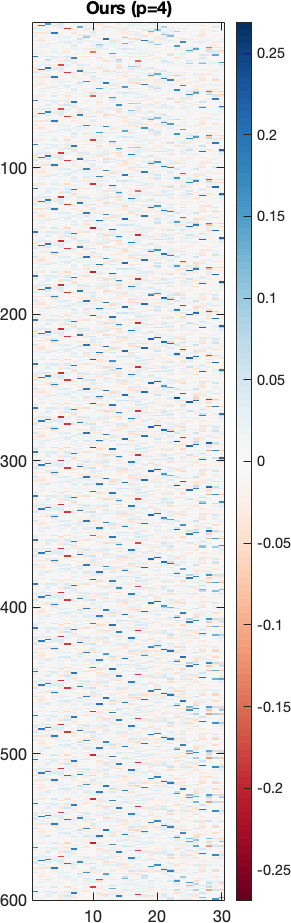} 
      & \includegraphics[width=0.32\linewidth]{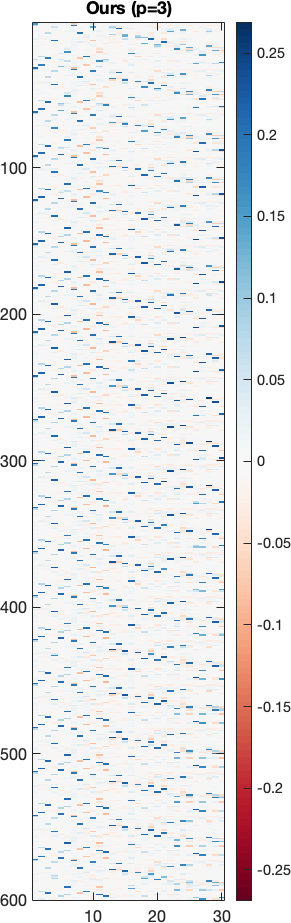} 
    \end{tabular}
    }
    \caption{Comparison of \textsc{Ours} (with $p=3$ and $p=4$)  to the Riemannian subgradient-type method by Li et~al.~\cite{li2021weakly}.  Here, permutation synchronisation problems from the CMU house sequence (see Sec.~\ref{sec:exp} for details) are evaluated. \textbf{Top:} we consider the fscore ($\uparrow$), objective value ($\uparrow$), and runtime ($\downarrow$), where the individual instances of permutation synchronisation problems vary along the horizontal axis. \textbf{Bottom:} for each of the three methods we show the obtained $U$-matrix for $k=20$  (before projection). It can be seen that the Riemannian subgradient-type method and Ours ($p=4$) obtain sparse solutions with few elements with large absolute values (both positive and negative), whereas Ours ($p=3$) obtains a sparse and (mostly) nonnegative solution. }
    \label{fig:housequant2} 
\end{figure*}

\subsection{Effect of Sparsity-Promoting Secondary Objective}
In Fig.~\ref{fig:sparsityQual} we illustrate the effect of our sparsity-promoting secondary objective. It can clearly be seen that our method (right) results in a significantly sparser solution compared to the Orthogonal Iteration algorithm (left).
\begin{figure*}[t!]
\centering
     \centerline{
    \begin{tabular}{ccc}%
     \includegraphics[width=0.35\linewidth]{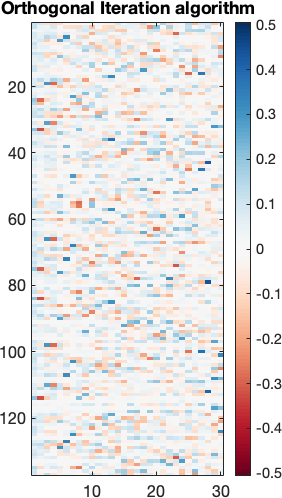}  & & \includegraphics[width=0.35\linewidth]{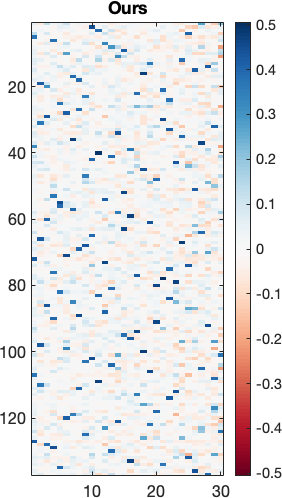} 
    \end{tabular}
    }
    \caption{Illustration of the effect of our sparsity-promoting secondary objective $g$ for a synthetic permutation synchronisation problem ($k=5, d=30, \rho=0.9, \sigma=0.3$, cf.~Sec.~\ref{sec:exp}). The matrix $U$ obtained by the Orthogonal Iteration algorithm (left) is not sparse. Our method  gives a sparse and mostly nonnegative $U$ (right).}
    \label{fig:sparsityQual} 
\end{figure*}

\end{document}